\newcommand*{\circled}[1]{\lower.7ex\hbox{\tikz\draw (0pt, 0pt)%
    circle (.5em) node {\makebox[1em][c]{\small #1}};}}
\DeclareMathAlphabet{\mathcal}{OMS}{cmsy}{m}{n}
\DeclareSymbolFont{largesymbols}{OMX}{cmex}{m}{n}
\renewcommand{\thefootnote}{\fnsymbol{footnote}}
\newtheorem{theorem}{Theorem}[section]
\newtheorem{lemma}{Lemma}[section]
\newtheorem{remark}{Remark}[section]
\newcommand{\ep}{\hfill\rule{0.15cm}{0.35cm}\vskip 0.3cm}
\newenvironment{proof}[1][Proof.]{ \begin{trivlist}
\item[\hskip \labelsep {\bfseries #1}]}{\ep\end{trivlist}}
\numberwithin{equation}{section}
\begin{document}

\begin{frontmatter}

\title{Finite element method for a
constant time delay subdiffusion equation with Riemann-Liouville fractional derivative}
\renewcommand{\thefootnote}{\fnsymbol{footnote}}
\author[xtu]{Weiping Bu\corref{cor}}
\author[xtu]{Chen Nie}
\author[xtu]{Weizhi Liao}
\address[xtu]{School of Mathematics and Computational Science$\And$Hunan Key Laboratory for Computation and Simulation in Science and Engineering,
Xiangtan University, Hunan 411105, China}
\cortext[cor]{Corresponding author.}
\begin{abstract}
This work considers to numerically solve a subdiffusion equation involving constant time delay $\tau$ and Riemann-Liouville fractional derivative. First, a fully discrete finite element scheme is developed for the considered problem under the symmetric graded time mesh, where the Caputo fractional derivative is approximated via the L1 formula, while the Riemann-Liouville integral is discretized using the fractional right rectangular rule. Under the assumption that the exact solution has low regularities at $t=0$ and $\tau$, the local truncation errors of both the L1 formula and the fractional right rectangular rule are analyzed. It is worth noting that, by setting the mesh parameter $r=1$, the symmetric graded time mesh will degenerate to a uniform mesh. Consequently, we proceed to discuss the stability and convergence of the proposed numerical scheme under two scenarios. For the uniform time mesh, by introducing a discrete sequence $\{P_k\}$, the unconditional stability and local time error estimate for the developed scheme is established. Conversely, on the symmetric graded time mesh, through the introduction of a discrete fractional Gronwall inequality, the stability and globally optimal time error estimate can be obtained. Finally, some numerical tests are presented to validate the theoretical results.
\end{abstract}
\begin{keyword}
Subdiffusion equation with constant time delay, finite element method, stability, local and global time error estimates.
\end{keyword}
\end{frontmatter}

\section{Introduction}
\setlength{\parindent}{2em}

Since fractional calculus can successfully capture the memory and hereditary properties that
exist in a variety of materials and processes, it has attracted numerous scholars to
investigate its application, theory and numerical method \cite {Hilfer2000Applications,SUN2018collection,JIN2019Numerical,Li2019book,Kilbas2006book}. At the same time, it is worth pointing out that time delay often occurs in the real world, and is widely used in many fields of life sciences \cite {Schiesser2019book,Smith2010book,Polyanin2024book,Erneux2009book}. Recently, researchers have found that fractional delay differential equations can reflect both the historical memory effect and the response lag characteristics of the system, and thus have higher accuracy and adaptability in some fields such as biology, control and engineering \cite {book,Carvalho2017HIV/AIDS, RIHAN2021Fractional,NIRMALA201687,Rakhshan2020Fractional,LI20241Modeling}.

In this paper, we discuss to numerically solve the following constant time delay time fractional diffusion equation with Riemann-Liouville fractional derivative
    \begin{equation}\label{eq:main-equation}
    \begin{aligned}
         \partial_{t}u(x,t) &= \partial_{t}^{1-\alpha}\left(p\Delta u + au(x,t)\right) + bu(x,t-\tau) + f(x,t),\quad (x,t) \in \Omega_T,
    \end{aligned}
    \end{equation}
  with the initial and boundary conditions
  \begin{equation}\label{eq:initial equation}
      \begin{aligned}
          u(x,t) &= \varphi(x,t), \quad(x,t) \in \Omega \times [-\tau,0],
      \end{aligned}
  \end{equation}
  \begin{equation}\label{eq:boundary conditions}
      \begin{aligned}
\left.u(x,t)\right|_{\partial\Omega} &= 0,\quad t \in [-\tau,K\tau],
      \end{aligned}
  \end{equation}
where $0 < \alpha < 1$, $\Omega_T = (0,L) \times (0,K\tau]$, $\Omega = (0,L)$, $K \in \mathbb{N}^+$, $p > 0$, $a \leq 0$, $b \neq 0$ are some constants, $\tau > 0$ denotes the delay term, $f(x,t)$ is continuous on $\Omega_T$, $\varphi(x,t)$ is continuous on $\Omega \times [-\tau,0],$ and $\partial_t$ denotes the first derivative operator with respect to $t$. Here the Riemann-Liouville fractional derivative of order $1-\alpha$ is defined by
\begin{equation*}\label{eq:RL-fractional-derivative}
    \partial_t^{1-\alpha}u(x,t) = \partial_t {_0I_t^\alpha u(x,t)}, \quad 0 < \alpha < 1,
\end{equation*}
with the Riemann-Liouville fractional integral
\begin{equation*}
    _0I_t^\alpha u(x,t)=\int_0^t \omega_\alpha(t-s) u(x,s) \, ds
\end{equation*}
where the kernel function $\omega_\alpha(t) := \frac{t^{\alpha-1}}{\Gamma(\alpha)}$ and $\Gamma(\cdot)$ denotes the Gamma function.

In recent years, there have been a lot of theoretical and numerical studies on time fractional diffusion equations with time delay. In \cite{Zhu-2019-Existence,Ouyang-2011-Existence}, the existence and uniqueness of the solutions for nonlinear time fractional diffusion equations with time delay are discussed. And then, Ref. \cite{Karel-2022-existence} investigates the existence and uniqueness of the solution to a variable order time fractional delay diffusion equation. In \cite{Prakash-2020-Exact}, Prakash et al. propose the invariant subspace method to derive the exact solution of a nonlinear time fractional diffusion equation with time delay.
Hendy et al. \cite{Hendy-2023-Theoretical} investigate the long-time behavior of time fractional delay diffusion equations including the asymptotic stability and contractivity of the solution. Yao and Yang \cite{Yao-2023-Stability} obtain the asymptotical stability and long-time decay rates of the solutions to a fractional delay diffusion-wave equation. For a class of nonlinear fractional reaction-diffusion equations with delay. Shah and Irshad \cite{Shah-2025-Ulam-Hyers-Mittag-Leffler} use the fixed-point method to investigate the Ulam-Hyers-Mittag-Leffler stability of the fractional problem on a compact interval with respect to the Chebyshev and Bielecki norms.

In order to numerically solve the time fractional delay diffusion equations, Refs. \cite {Zhang-2017-Analysis,LI-2018-Convergence} propose the compact finite difference method.
Kumar et al. \cite {Kumar-2021-Numerical} develop a stable finite difference method for the time fractional convection-diffusion equation with singular perturbation and time delay.
Zhao et al. \cite {Zhao2018fast} devise a fast second-order implicit scheme for the time-space fractional delay diffusion equation.
By employing the finite difference/spectral Galerkin method, Hendy et al. \cite {Hendy-2023-Theoretical} obtain an effective numerical scheme, and prove that the numerical solution has the ability to preserve asymptotic contractivity and stability.
For solving the time fractional delay diffusion-wave equation, the finite difference/meshless method and Crank-Nicolson difference/cubic-trigonometric splines method are developed in \cite {Abbaszadeh-2019-Numerical,Chawla-2025-Numerical}.
In \cite {Peng2022ConvergenceAS,Peng2024Unconditionally}, the finite element method is devised for two time fractional delay diffusion equations, and the convergence and superconvergence are investigated.
In \cite {Farhood-2023-Solving,ZAKY2023L1}, the shifted Legendre-Laguerre operational matrices method and L1 type difference/Galerkin spectral method are employed for the variable-order time fractional delay diffusion equation.
In \cite {Pimenov-2017-non-linear,Abbaszadeh2021Galerkin}, the finite difference method and Galerkin meshless reproducing kernel particle method are used to solve the time distributed-order diffusion equation with delay.
Besides, we also note that a novel discrete Gronwall inequality based on the \(L2-1\sigma\) formula is developed and used in the numerical analysis of the time fractional delay diffusion equation \cite{Hendy2019Gronwall,Zaky2021Numerical}.

It is worth pointing out that a fractional ordinary differential equation with delay  is
investigated in \cite{Morgado2013analysis} investigate, and the derived exact solution implies that it exhibits multi-singularity at $k\tau$ with time delay $\tau$.
Subsequently, the multi-singularities of time fractional delay diffusion equation and delay fractional ordinary differential equation are rigorously demonstrated in \cite{Tan,CenVong-2023-Tracking,Bu-2024-Finite,OU2024Variable}, which show that
\begin{equation}\label{eqs250718A}
|\partial_{t}u_{k\tau}|\leq  C\left(1+(t-(k-1)\tau)^{k\alpha-1}\right),
\end{equation}
where \(u_{k\tau}(x,t)\) represents \(u(x,t)\) for \(t\in((k - 1)\tau,k\tau]\).
Furthermore, in order to overcome the temporal multi-singularity, Refs. \cite{Tan,Bu-2024-Finite,OU2024Variable} used the symmetrical graded mesh to develop numerical scheme to achieve global optimal time convergence rate for time fractional delay diffusion equation, and corrected $L$-type method is proposed for delay fractional ordinary differential equation \cite{CenVong-2023-Tracking}.
When the globally time-optimal convergent numerical schemes have been established, this naturally motivates researchers to focus on developing numerical methods with pointwise-in-time error estimates.
In \cite{Liang-2023-Pointwise}, the discrete Laplace transform method is used to obtain pointwise error estimate of L1 method of delay fractional ordinary differential equation. Meanwhile, a new discrete Gronwall inequality is established in \cite{Bu-2025-Local} to achieve the local temporal convergence rate of L1/finite element scheme for time fractional delay diffusion equation.

Recently, when investigating the temporal regularity of the solution to (\ref{eq:main-equation})--(\ref{eq:boundary conditions}), we find that \cite{Bu-2025-Finite}
\begin{equation}\label{eqs250719A}
|\partial_{t}u_{k\tau}|\leq C\left(1+t^{\alpha-1}\right),\quad k=1,2,\cdots,K,
\end{equation}
and  the second time derivative satisfies
\begin{equation}\label{eqs250719B}
|\partial_{t}^2u_{\tau}|\leq C\left(1+t^{\alpha-2}\right) \quad
\text{and} \quad
|\partial_{t}^2u_{k\tau}|\leq C\left(1+(t-\tau)^{\alpha-1}\right),\quad k=2,3,\cdots K,
\end{equation}
where
\(u_{\tau}(x,t):=u_{1\tau}(x,t)\), and $\partial^2_t$ denotes the second derivative operator
with respect to $t$.
The above results mean that  the solution has distinct and superior regularity in time compared with that mentioned in Refs. \cite{Tan,Bu-2024-Finite,CenVong-2023-Tracking,OU2024Variable} satisfying (\ref{eqs250718A}).
Therefore, it motivates us to develop effective numerical method to solve  (\ref{eq:main-equation})--(\ref{eq:boundary conditions}).
The main contribution of this work includes:

$\bullet$
A fully discrete finite element scheme is established for (\ref{eq:main-equation})--(\ref{eq:boundary conditions}) by employing
L1 formula for Caputo fractional derivative and the fractional right rectangle
rule for Riemann-Liouville fractional integral on the symmetric graded time mesh,
which degenerates to a uniform mesh when the mesh parameter $r=1$.

$\bullet$
Under the new regularity assumptions (\ref{eqs250719A})--(\ref{eqs250719B}) and
the symmetric graded time mesh, the local truncation errors of L1 formula and fractional right rectangle rule are investigated.

$\bullet$
For the case of uniform time
mesh, by introducing a discrete sequence $\{P_k\}$, we obtain the stability and local time error estimate for the developed numerical scheme without the increasing Mittag-Leffler function. Notably, local time error estimate implies that the maximum temporal error order is $\alpha$ for the interval $(0,\tau]$ and 1 for $(\tau,K\tau].$
It is clear that due to the improved regularity,
the finite element scheme potentially yields a superior time
accuracy near $t=(i\tau)^+,i\geq1$ than that attained by \cite{Bu-2025-Local}.

$\bullet$
For the symmetric graded time mesh,
by applying a classical discrete fractional Gronwall inequality, the stability and global convergence order $\min\{1, r\alpha\}$ are obtained.
The convergence result shows that the global convergence order is $\alpha$ for $r=1$, while
maximum time error order can reach 1 on the
interval $(\tau, K\tau]$ for the aforementioned case. Therefore, the convergence results for the aforementioned uniform mesh do not constitute a special case of the present convergence findings.
\\
Besides, it is noteworthy that the convergence behavior observed under both uniform and symmetric graded time meshes demonstrates that the accuracy of numerical solution can be flexibly adjusted between local and global time convergence orders by tuning the mesh parameter $r$.

The structure of the remainder of this paper is organized as follows.
In Section 2,
the fully discrete finite element scheme for (\ref{eq:main-equation})--(\ref{eq:boundary conditions}) is established,
and the local truncation errors of L1 formula and fractional right rectangular rule are discussed.
Then, based on uniform time mesh,
the stability and local time error estimate of the developed numerical scheme are obtained in section 3.
In Section 4, the stability and global time
error estimate of the proposed numerical scheme are investigated under the symmetrical graded time mesh.
In Section 5, some numerical tests are provided to verify that the correctness of
numerical theory.

{\bf Notation}. In the following paper, $C$ represents a positive number that  can  different in different situations, and  $a \lesssim b$ means $a\leq C b.$

\section{ Numerical scheme of \eqref{eq:main-equation}--\eqref{eq:boundary conditions}}
In order to develop an efficient numerical scheme for (\ref{eq:main-equation})--(\ref{eq:boundary conditions}), similar to \cite{Bu-2025-Finite}, we firstly transform (\ref{eq:main-equation}) into
\begin{equation}\label{eq:After Integration}
\begin{aligned}
{}_0^C\mathrm{D}_t^{\alpha} u(x,t) &= p\Delta u + au(x,t) + {}_0I_t^{1 - \alpha}\left[bu(x,t - \tau)\right] + G(x,t), \quad (x,t) \in \Omega_T, \\
\end{aligned}
\end{equation}
where
$G(x,t)={}_0I_t^{1 - \alpha}f(x,t),$ and the Caputo fractional derivative is defined by
\[{}_0^C\mathrm{D}_t^{\alpha}u(x,t):=\int_{0}^{t} \omega_{1-\alpha} (t-s)\partial_s u(x,s)ds.\]
To divide the time interval $[-\tau, K\tau]$, the following symmetric graded mesh is used \cite{Bu-2024-Finite}
\begin{equation}\label{eq:250906a}
t_{-2N}=-\tau,\quad t_{n}=
\begin{cases}
\frac{\tau}{2}\left(\frac{n - 2(i - 1)N}{N}\right)^{r}+(i - 1)\tau, & 2(i - 1)N + 1\leq n < (2i - 1)N,\\
-\frac{\tau}{2}\left(\frac{2iN - n}{N}\right)^{r}+i\tau, & (2i - 1)N\leq n\leq 2iN,
\end{cases}
\end{equation}
where $i = 0, 1, 2, \cdots, K$, $N\geq2$ is a integer, the mesh parameter $r\geq 1$.
It is noteworthy that when $r=1$, the symmetric graded mesh (\ref{eq:250906a}) will
degenerate into uniform time mesh.
Let $\rho_n=t_n - t_{n - 1}$,  $u^n = u(t_n)$ and $\nabla_tu^n=u^n-u^{n-1}$. According to \cite{Bu-2024-Finite}, the aforementioned  symmetric graded mesh satisfies
\begin{equation}\label{eq:graded mesh satisfies}
    \rho_{1}=t_{1}\simeq N^{-r},\quad \rho_{n}\lesssim (\rho_{1})^{1/r}(t_{n}-(i - 1)\tau)^{1 - \frac{1}{r}}, \ 2(i-1)N+1\leq n \leq2iN, \ i=1,2,\cdots,K.
\end{equation}
By \cite{Li2019book}, we introduce the L1 approximation of Caputo fractional derivative
\begin{equation}\label{eq:discretization of Caputo}
    D^\alpha u^n=\sum_{k=1}^{n}a_{n-k}^{(n)}\nabla_t u^k,
\end{equation}
and the fractional right rectangle formula of Riemann-Liouville  fractional integral
\begin{equation}\label{eq:J825}
    J^{1-\alpha}u^{n-2N}=\sum_{k=1}^n\rho_k a_{n-k}^{(n)}u^{k-2N},
\end{equation}
where $1\leq n\leq 2KN$ and
\begin{equation}\label{eqs250829A}
    a_{n-k}^{(n)} = \frac{\omega_{2-\alpha}(t_{n}-t_{k-1}) - \omega_{2-\alpha}(t_{n}-t_{k})}{\rho_k}.
\end{equation}

Since the discretization of the Riemann-Liouville fractional integral adopts the ‌fractional right rectangle formula‌ instead of the fractional trapezoidal formula employed in works such as
\cite{Bu-2024-Finite}, and the ‌regularity of the solution‌ given herein differs from the previous cases, we have to re-examine the ‌local truncation error‌ of the fractional right-rectangle formula and the L1 formula under the regularity conditions \eqref{eqs250719A} and \eqref{eqs250719B}.

\begin{lemma}\label{lem: Truncation error of J on the hierarchical grid}
Assume that  $u$  satisfies the regularity assumption (\ref{eqs250719A}) and $\partial_t\varphi \in L^{\infty}[-\tau,0]$. Then for the symmetric grade mesh, it holds
\begin{equation}\label{eqs250824B}
\begin{aligned}
    \left|(J^{1-\alpha}-_{0}{I}_t^{1-\alpha})u^{n-2N}\right| &\lesssim
        \rho_1^{\frac{1}{r}}, & 1 \leq n \leq 2KN.
\end{aligned}
\end{equation}
\end{lemma}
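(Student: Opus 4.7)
My plan is to write the quadrature error in local form using the identity $\rho_k a_{n-k}^{(n)} = \int_{t_{k-1}}^{t_k}\omega_{1-\alpha}(t_n - s)\,ds$. Setting $w(s):=u(x, s-\tau)$ and noting that the symmetric mesh places the delay point $s=\tau$ exactly at $t_{2N}$, this gives
\[
(J^{1-\alpha}-{}_{0}I_t^{1-\alpha})u^{n-2N} \;=\; \sum_{k=1}^{n}\int_{t_{k-1}}^{t_k}\omega_{1-\alpha}(t_n - s)\bigl[w(t_k) - w(s)\bigr]\,ds.
\]
Below $t_{2N}$, the assumption $\partial_t\varphi\in L^{\infty}$ makes $w$ Lipschitz, whereas above $t_{2N}$ the regularity (\ref{eqs250719A}) forces $w'$ to blow up like $(s-\tau)^{\alpha-1}$. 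I would therefore partition the sum into three regimes: the pre-delay indices $1\le k\le\min(n,2N)$, the singular index $k=2N+1$ (only if $n>2N$), and the post-delay indices $2N+2\le k\le n$. The main mesh tool throughout is $\rho_k\lesssim\rho_1^{1/r}$ from (\ref{eq:graded mesh satisfies}), together with the telescoping identity for $\int_{t_{k-1}}^{t_k}\omega_{1-\alpha}(t_n-s)\,ds$.

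For $k\le\min(n,2N)$, the bound $|w(t_k)-w(s)|\le C(t_k-s)\le C\rho_k\lesssim C\rho_1^{1/r}$ combined with $\sum_k\int_{t_{k-1}}^{t_k}\omega_{1-\alpha}(t_n-s)\,ds\le\omega_{2-\alpha}(t_n)\lesssim 1$ yields a contribution of order $\rho_1^{1/r}$. For $k\ge 2N+2$, a mean value argument applied to the primitive $(\sigma-\tau)^{\alpha}$ gives the localized estimate $|w(t_k)-w(s)|\le C(t_k-s)\bigl[1+(s-\tau)^{\alpha-1}\bigr]$, and summing against $\omega_{1-\alpha}(t_n-s)$ leans on the closed-form Beta integral
\[
\int_{\tau}^{t_n}\omega_{1-\alpha}(t_n-s)(s-\tau)^{\alpha-1}\,ds \;=\; \Gamma(\alpha),
\]
which is bounded independently of $n$; together with $\max_k\rho_k\lesssim\rho_1^{1/r}$, this again produces an $O(\rho_1^{1/r})$ bound.

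The delicate piece is the isolated interval $k=2N+1$, where the integrand simultaneously sees the kernel singularity of $\omega_{1-\alpha}(t_n-s)$ and the transition in $w'$ at $s=\tau$. The key step is to combine the regularity of $\partial_t u$ near the origin with sub-additivity of $x\mapsto x^{\alpha}$ on $[0,\infty)$ to derive the H\"older bound $|w(t_{2N+1})-w(s)|\le C(t_{2N+1}-s)^{\alpha}$ on $[t_{2N},t_{2N+1}]$. For the diagonal case $n=2N+1$, the exponents $-\alpha$ (kernel) and $+\alpha$ (modulus) cancel and direct integration yields a contribution of order $\rho_{2N+1}=\rho_1\le\rho_1^{1/r}$. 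For $n\ge 2N+2$, I would bound the kernel uniformly on $[t_{2N},t_{2N+1}]$ by $(t_n-t_{2N+1})^{-\alpha}/\Gamma(1-\alpha)\le\rho_{2N+2}^{-\alpha}/\Gamma(1-\alpha)$, compute $\int_{t_{2N}}^{t_{2N+1}}(t_{2N+1}-s)^{\alpha}\,ds=\rho_{2N+1}^{1+\alpha}/(1+\alpha)$, and use the mesh identity $\rho_{2N+2}\ge\rho_{2N+1}=\rho_1$ (which holds for $r\ge 1$ because $2^{r}-1\ge 1$) to collapse the product to $\lesssim\rho_1\le\rho_1^{1/r}$.

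Summing the three contributions proves the claim. The main obstacle is the single-interval analysis at $k=2N+1$; the crucial observation is that the symmetric grading enforces $\rho_{2N+1}=\rho_1$, so the mesh is as fine just after $s=\tau$ as just after $s=0$, and this extra refinement is exactly what absorbs the combined $\alpha$-order singularity shared between the kernel and the modulus of continuity of $w$.
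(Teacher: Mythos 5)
Your proposal is correct and follows essentially the same route as the paper: the identical three-way split around the delay point (Lipschitz pre-delay block from $\partial_t\varphi\in L^\infty$, the isolated transition interval $k=2N+1$, and the post-delay block controlled by the $(s-\tau)^{\alpha-1}$ blow-up together with a Beta-type integral and $\max_k\rho_k\lesssim\rho_1^{1/r}$). The only differences are cosmetic — you bound the transition interval via the H\"older modulus $(t_{2N+1}-s)^{\alpha}$ and the mesh facts $\rho_{2N+1}=\rho_1\le\rho_{2N+2}$, whereas the paper keeps the product form $(t-\tau)^{\alpha-1}(t_{2N+1}-t)$ and rescales to a normalized Beta integral, reaching the same $O(\rho_1)$ bound for that piece.
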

\begin{proof}
Here we focus mainly on the case \(n \geq 2N+1\).
Define
\(m(t) = u(t - \tau) - u^{k - 2N}\), \(t\in(t_{k-1},t_k),\ k \geq 1. \)
If $k\neq 2N+1$, then
 the  mean value theorem gives
\begin{equation}\label{eq:m(s)1}
    m(t) \lesssim \rho_k \partial_t u(\xi_k - \tau),\quad t_{k-1} < \xi_k <t_k;
\end{equation}
and if $k=2N+1$, then
\begin{equation}\label{eq:m(s)2}
\begin{aligned}
    |m(t)| &= \left|\int_{t}^{t_{2N+1}} \partial_sm(s) \, ds \right|\\
    &=\left|\int_{t}^{t_{2N+1}}(s-\tau)^{\alpha-1}\left[(s-\tau)^{1-\alpha}\partial_su(s-\tau)\right]ds\right|\\
    &\lesssim(t-\tau)^{\alpha-1}(t_{2N+1}-t),
\end{aligned}
\end{equation}
where $\partial_t u(\xi_k - \tau)$ denotes $\partial_t u(t - \tau)|_{t=\xi_k}$ and the regularity (\ref{eqs250719A}) is used.
According to (\ref{eq:m(s)1}) and (\ref{eq:m(s)2}),
we split \(\left\lvert \left(J^{1-\alpha}{-}_{0}I^{1-\alpha}_{t}\right)u^{n - 2N} \right\rvert\) into the following three parts
\begin{equation}\label{eqs250724M}
\begin{aligned}
&\left\lvert  \left(J^{1-\alpha}{-}_{0}I^{1-\alpha}_{t}\right)u^{n - 2N} \right\rvert \\
\lesssim &\left\lvert \sum_{k=1}^{2N} \int_{t_{k-1}}^{t_k} (t_n - t)^{-\alpha} m(t) \, dt \right\rvert
+ \left\lvert \int_{t_{2N}}^{t_{2N+1}} (t_n - t)^{-\alpha} m(t) \, dt \right\rvert + \left\lvert \sum_{k=2N+2}^n \int_{t_{k-1}}^{t_k} (t_n - t)^{-\alpha} m(t) \, dt \right\rvert \\
\lesssim &\left\lvert \sum_{k=1}^{2N} \int_{t_{k-1}}^{t_k} (t_n - t)^{-\alpha} \rho_k \partial_t u(\xi_k - \tau) \, dt \right\rvert
+\int_{t_{2N}}^{t_{2N+1}} (t_n - t)^{-\alpha}(t-\tau)^{\alpha-1}(t_{2N+1}-t) \, dt \\
&+ \left\lvert \sum_{k=2N+2}^n \int_{t_{k-1}}^{t_k} (t_n - t)^{-\alpha}  \rho_k \partial_t u(\xi_k - \tau) \, dt \right\rvert\\
:=& \mathrm{I} +  \mathrm{II} +  \mathrm{III}.
\end{aligned}
\end{equation}

First it follows from $\partial_t\varphi \in L^{\infty}[-\tau,0]$ that
\begin{equation}\label{eqs250826A}
\begin{aligned}
    \mathrm{I}
&= \left\lvert \sum_{k=1}^{2N} \int_{t_{k-1}}^{t_k} (t_n - t)^{-\alpha} \rho_k \partial_t \varphi(\xi_k - \tau) \, dt \right\rvert \\
&\lesssim  \rho_1^{\frac{1}{r}} \sum_{k=1}^{2N} \int_{t_{k-1}}^{t_k} (t_n - t)^{-\alpha}t_k^{1-\frac{1}{r}} \, dt \\
&\lesssim  \rho_1^{\frac{1}{r}}t_{2N}^{1-\frac{1}{r}}  \int_{{0}}^{t_{2N}} (t_n - t)^{-\alpha} \, dt\\
&\lesssim \rho_1^{\frac{1}{r}}.
\end{aligned}
\end{equation}
For $t\in(t_{2N},t_{2N+1})$, applying the fact
\[\frac{t_n-\tau}{t_{2N+1}-\tau}\lesssim\frac{t_{n}-t}{t_{2N+1}-t},
\quad \text{i.e.},\quad
    t_n - \tau \lesssim \frac{t_n - t}{t_{2N+1} - t} (t_{2N+1} - \tau),\]
then
\begin{equation}\label{eqs250826B}
\begin{aligned}
\mathrm{II}
&= \int_{t_{2N}}^{t_{2N+1}} (t_n - t)^{-\alpha} (t_{2N+1} - t) (t - \tau)^{\alpha-1}  \, dt \cdot \frac{(t_n - \tau)^\alpha}{\rho_1^{\alpha+1}} \frac{\rho_1^{\alpha+1}}{(t_n - \tau)^\alpha} \\
&\lesssim \int_{t_{2N}}^{t_{2N+1}} (t_{2N+1} - t)^{1-\alpha} (t - \tau)^{\alpha-1} \, dt \cdot \frac{1}{t_{2N+1} - \tau} \frac{(t_{2N+1} - \tau)^{\alpha+1}}{\rho_1^{\alpha+1}}  \frac{\rho_1^{\alpha+1}}{(t_n - \tau)^\alpha} \\
&= \int_{t_{2N}}^{t_{2N+1}} \left( 1- \frac{t - \tau}{t_{2N+1} - \tau}\right)^{1-\alpha} \left( \frac{t - \tau}{t_{2N+1} - \tau} \right)^{\alpha-1} \, d \left(\frac{t-\tau}{t_{2N+1} - \tau} \right)\cdot\frac{\rho_1^{\alpha+1}}{(t_n - \tau)^\alpha}\\
&\lesssim \frac{\rho_1^{\alpha+1}}{(t_n - \tau)^\alpha}.
\end{aligned}
\end{equation}
Since
\((t_k - \tau)^{1-\alpha} \sup\limits_{t \in (t_{k-1}, t_{k})}| \partial_t u(t - \tau) |\lesssim 1\) for \(k\geq2N+2\),
it is obvious that
\begin{equation}\label{eqs250826C}
    \begin{aligned}
        \mathrm{III}
&\lesssim  \sum_{k=2N+2}^{n} \int_{t_{k-1}}^{t_k} (t_n - t)^{-\alpha} \rho_k (t_k - \tau)^{\alpha-1} \left[(t_k - \tau)^{1-\alpha} |\partial_t u(\xi_k - \tau)|\right] \, dt \\
&\lesssim  \sum_{k=2N+2}^{n} \int_{t_{k-1}}^{t_k} (t_n - t)^{-\alpha} (t_k - \tau)^{\alpha-1} \, dt\cdot \max\limits_{2N+2 \leq k\leq n}\rho_k \\
&= \int_{t_{2N+1}}^{t_n} \left( 1-\frac{t - \tau}{t_n - \tau}
\right)^{-\alpha} \left( \frac{t- \tau}{t_n - \tau} \right)^{\alpha-1} \, d \left(\frac{t-\tau}{t_n - \tau}\right)\cdot \max\limits_{2N+2 \leq k\leq n}\rho_k\\
&\lesssim \rho_1^{\frac{1}{r}}.
    \end{aligned}
\end{equation}

Based on \eqref{eqs250826A}--\eqref{eqs250826C}, we can conclude that for \(n \geq 2N + 1\), $\left| (J^{1-\alpha}{-}_{0}I^{1-\alpha}_{t}) u^{n-2N} \right| $ can be bounded by
\begin{equation}\label{eqs250826D}
    \begin{aligned}
        \left| (J^{1-\alpha}{-}_{0}I^{1-\alpha}_{t}) u^{n-2N} \right|
        \lesssim
        \rho_1^{\frac{1}{r}}.
    \end{aligned}
\end{equation}
When  \(1 \leq n \leq 2N\), it is clear that
\begin{equation}\label{eqs250826E}
    \begin{aligned}
        &\left| (J^{1-\alpha}{-}_{0}I^{1-\alpha}_{t}) u^{n-2N} \right| \\
       \lesssim  &\rho_1^{\frac{1}{r}}\left|\int _0^{t_n} (t_n-t)^{-\alpha}  \partial_t\varphi (\xi_k-\tau)dt\right|\\
        \lesssim & \rho_1^{\frac{1}{r}}.
    \end{aligned}
\end{equation}
Thus the combination of (\ref{eqs250826D}) and (\ref{eqs250826E}) indicates that  (\ref{eqs250824B}) holds.
\end{proof}

Define  $ \psi_k$ as
\begin{equation}\label{eq:psi-definitions}
    \begin{aligned}
        \psi_{k} =
        \begin{cases}
            \sup\limits_{t\in(t_{k-1},t_{k})} t^{1 - \alpha} \left| \frac{\nabla_{t}u^{k}}{\rho_{k}} - \partial_{t}u \right|, \quad k = 1,\\
              t_{k}^{1 + \alpha} \sup\limits_{t\in(t_{k - 1},t_{k})}| \partial_{t}^{2}u|, \quad 2 \leq k \leq  2N,\\
              (t_k - \tau)^{1+\alpha} \sup\limits_{t \in (t_{k-1}, t_k)} \vert \partial_t^2 u \vert, \ 2N + 2 \leq k \leq n.
        \end{cases}
    \end{aligned}
\end{equation}
Then the following property holds.
\begin{lemma}\label{lem: grade psi approximate value}
Suppose that \(u\) is the solution of \eqref{eq:main-equation}
and satisfies the regularity \eqref{eqs250719A}--\eqref{eqs250719B}.
Under the symmetric grade mesh \eqref{eq:250906a}, we have
 \begin{equation}
    \begin{aligned}
        \psi_k\lesssim
        \begin{cases}
            1,           &\quad  k =1,\\
            t_k^{2\alpha-1},  &\quad 2 \leq  k \leq 2N,\\
            (t_k - \tau)^{2\alpha}, &\quad 2N + 2 \leq k \leq n.
        \end{cases}
    \end{aligned}
\end{equation}
\end{lemma}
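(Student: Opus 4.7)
The plan is to handle the three ranges of $k$ separately, each reducing to a direct application of the regularity bounds (\ref{eqs250719A})--(\ref{eqs250719B}) combined with a single mesh fact: adjacent nodes of the symmetric graded mesh (\ref{eq:250906a}) are comparable. Concretely, for $k\ge 2$ one has $t_{k-1}\simeq t_k$, and for $k\ge 2N+2$ one has $(t_{k-1}-\tau)\simeq(t_k-\tau)$, because the ratio $(k/(k-1))^r$ is bounded. This comparability is what allows a singular factor $\sup|\partial_t^2 u|$, attained near the left endpoint $t_{k-1}$, to be absorbed into a weight evaluated at $t_k$ at uniform cost.

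For $k=1$, I would rewrite the difference quotient as an average,
\begin{equation*}
\frac{\nabla_t u^1}{\rho_1} \;=\; \frac{1}{t_1}\int_0^{t_1}\partial_s u(s)\,ds,
\end{equation*}
then invoke (\ref{eqs250719A}) inside the integral to obtain $|\nabla_t u^1/\rho_1|\lesssim 1+t_1^{\alpha-1}$. Multiplying by $t^{1-\alpha}$ for $t\in(0,t_1)$ produces only bounded terms via $t\le t_1\le\tau$ and $(t/t_1)^{1-\alpha}\le 1$, while the same regularity immediately gives $t^{1-\alpha}|\partial_t u(t)|\lesssim 1$. Adding the two contributions yields $\psi_1\lesssim 1$.

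For $2\le k\le 2N$, the interval $(t_{k-1},t_k)$ lies in $(0,\tau]$, so by the first bound in (\ref{eqs250719B}) we have $|\partial_t^2 u(t)|\lesssim 1+t^{\alpha-2}$, with the supremum on $(t_{k-1},t_k)$ controlled by the left endpoint, $\sup|\partial_t^2 u|\lesssim 1+t_{k-1}^{\alpha-2}$. Multiplying by $t_k^{1+\alpha}$ and invoking $t_{k-1}\simeq t_k$ gives $\psi_k\lesssim t_k^{1+\alpha}+t_k^{2\alpha-1}$; the first summand is absorbed into the second since $t_k^{2-\alpha}$ is bounded on $[0,\tau]$. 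The case $2N+2\le k\le n$ is entirely parallel: the second bound in (\ref{eqs250719B}) supplies $|\partial_t^2 u|\lesssim 1+(t-\tau)^{\alpha-1}$, and combining with the translated comparability $(t_{k-1}-\tau)\simeq(t_k-\tau)$ yields $\psi_k\lesssim(t_k-\tau)^{1+\alpha}+(t_k-\tau)^{2\alpha}\lesssim(t_k-\tau)^{2\alpha}$. I do not foresee a genuine obstacle; the only care needed is verifying that the ``$+1$'' pieces in (\ref{eqs250719A})--(\ref{eqs250719B}) are dominated by the target weights $t_k^{2\alpha-1}$ and $(t_k-\tau)^{2\alpha}$, which is immediate because $t_k$ and $t_k-\tau$ remain bounded on $[0,K\tau]$.
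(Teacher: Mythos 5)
Your proposal is correct and follows essentially the same route as the paper: average the difference quotient and use \eqref{eqs250719A} for $k=1$, and for $k\geq 2$ apply \eqref{eqs250719B} together with the comparability of adjacent nodes ($t_{k}\lesssim t_{k-1}$ for $k\geq 2$ and $t_k-\tau\lesssim t_{k-1}-\tau$ for $k\geq 2N+2$) to transfer the singular weight from the left endpoint to $t_k$. Your explicit remark that the ``$+1$'' contributions are absorbed because $t_k$ and $t_k-\tau$ are bounded is a detail the paper leaves implicit, but the argument is the same.
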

\begin{proof}
Since
\begin{equation}\label{es250806A}
    \begin{aligned}
        \left|\frac{\nabla u^1}{\rho_1}\right|&= \left|\frac{\int_{0}^{t_1}\partial_tu dt}{\rho_1}\right|\\
%&\lesssim\frac{\int_0^{t_1}\frac{1+t^{\alpha-1}}{t^{\alpha-1}}t^{\alpha-1}dt}{\rho_1}\\
%&\lesssim(1+t_1^{1-\alpha})\frac{\int_{0}^{t_1} t^{\alpha - 1} dt}{\rho_1}\\
  &\lesssim
  \frac{\int_{0}^{t_1} t^{\alpha - 1} dt}{\rho_1} =  t_1^{ \alpha-1},
    \end{aligned}
\end{equation}
it follows from (\ref{eq:psi-definitions}) and \eqref{es250806A} that
\begin{align*}
     \psi_{1}&
     \lesssim \sup_{t\in(t_{0},t_{1})} t^{1-\alpha}(|t_1^{\alpha-1}|+|\partial_tu(t)|)\\
     &\lesssim \sup_{t\in(t_{0},t_{1})}t^{1 - \alpha}(t_1^{\alpha-1}+ t^{\alpha - 1})\lesssim 1.
\end{align*}
Besides, applying the regularity assumption (\ref{eqs250719B}) and the fact $t_k\lesssim t_{k-1}$ when $k\geq2$ and $t_k-\tau\lesssim t_{k-1}-\tau$ when $k\geq2N+2$, it is easy  to  obtain that
\[
\begin{aligned}
\psi_k &= t_k^{1+\alpha} \sup_{t \in (t_{k-1}, t_k )} \left| \partial_t^2 u \right|\\
&\lesssim t_k^{2\alpha-1}
\end{aligned}
\]
for \( k = 2, 3, \cdots, 2N \), and
\[
\begin{aligned}
\psi_k &= (t_k - \tau)^{1+\alpha} \sup_{t \in (t_{k-1}, t_k)} \vert \partial_t^2 u \vert \\
&\lesssim (t_k - \tau)^{1+\alpha}(t_k - \tau)^{\alpha - 1}\\
&\lesssim (t_k - \tau)^{2\alpha}
\end{aligned}
\]
for \( k = 2N + 2, 2N + 3, \cdots, n \).
\end{proof}

By employing the defined ‌$\psi_k$‌, now we derive the ‌local truncation error of L1 formula.
\begin{lemma}\label{lem: Truncation error of D on the hierarchical grid}
Suppose that \(u\) is the solution of \eqref{eq:After Integration}
and satisfies the regularity \eqref{eqs250719A}--\eqref{eqs250719B}.
Then under the symmetric grade mesh \eqref{eq:250906a}, one has
\begin{equation}\label{eq:truncation error of D on the symmetrical grid}
\left|({D}^{\alpha}-{_0^C}D_{t}^{ \alpha})u^{n}\right| \lesssim
\begin{cases}
\begin{aligned}
&\left(\frac{\rho_{1}}{t_{n}}\right)^{\min\{\frac{2 - \alpha}{r},\alpha + 1\}},\ 1 \leq n \leq 2N ,\\
& \rho_1^{1+\alpha}(t_n-\tau)^{-\alpha}+\rho_1^{\frac{2-\alpha}{r}}(t_n-\tau)^{\frac{\alpha-2}{r}+1},  \ 2N + 1 \leq n \leq 2KN.\\
\end{aligned}
\end{cases}
\end{equation}
\end{lemma}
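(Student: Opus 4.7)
The starting point is to rewrite the truncation error in integral form. Using the identity $a_{n-k}^{(n)}=\rho_k^{-1}\int_{t_{k-1}}^{t_k}\omega_{1-\alpha}(t_n-t)\,dt$ implicit in \eqref{eqs250829A}, one has
$$
(D^{\alpha}-{_0^C}D_t^{\alpha})u^n \;=\; \sum_{k=1}^{n}\int_{t_{k-1}}^{t_k}\omega_{1-\alpha}(t_n-t)\left(\frac{\nabla_t u^k}{\rho_k}-\partial_t u(t)\right)dt.
$$
I would then split the sum according to the regularity behaviour of $u$ dictated by \eqref{eqs250719A}--\eqref{eqs250719B}: the initial singular cell $k=1$, the smooth cells $2\le k\le 2N$ (before the delay point $t=\tau$), the transitional cell $k=2N+1$ straddling $\tau$ (only for $n\ge 2N+1$), and the smooth cells $2N+2\le k\le n$.

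On each smooth cell, a Taylor expansion gives $|\nabla_t u^k/\rho_k-\partial_t u(t)|\le \rho_k\sup_{(t_{k-1},t_k)}|\partial_t^2 u|$, which by definition of $\psi_k$ in \eqref{eq:psi-definitions} and Lemma \ref{lem: grade psi approximate value} is bounded by $\rho_k t_k^{\alpha-1}$ for $2\le k\le 2N$ and by $\rho_k (t_k-\tau)^{\alpha-1}$ for $2N+2\le k\le n$. For the cell $k=1$ the second derivative is not integrable, so I would invoke the $\psi_1$-bound directly to get $|\nabla_t u^1/\rho_1-\partial_t u(t)|\le t^{\alpha-1}\psi_1\lesssim t^{\alpha-1}$. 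For the transitional cell $k=2N+1$, I would mirror the argument used in \eqref{eq:m(s)2} of Lemma \ref{lem: Truncation error of J on the hierarchical grid}: since $\partial_t u$ has an $(t-\tau)^{\alpha-1}$ singularity at $\tau$, the mean value theorem is replaced by integrating $\partial_s u$ against a pulled-out $(s-\tau)^{\alpha-1}$, producing $|\nabla_t u^{2N+1}/\rho_1-\partial_t u(t)|\lesssim (t-\tau)^{\alpha-1}$.

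Substituting these pointwise bounds into the integral decomposition and invoking the graded-mesh estimate $\rho_k\lesssim \rho_1^{1/r}(t_k-(i-1)\tau)^{1-1/r}$ from \eqref{eq:graded mesh satisfies}, the two distinct contributions match the two terms in the right-hand side of \eqref{eq:truncation error of D on the symmetrical grid}. Specifically, for $1\le n\le 2N$ the $k=1$ term contributes the saturation $(\rho_1/t_n)^{\alpha+1}$ (after bounding $\int_{0}^{t_1}\omega_{1-\alpha}(t_n-t)t^{\alpha-1}\,dt$ by $\rho_1^{\alpha}\cdot (\rho_1/t_n)$), while the smooth sum telescopes to $(\rho_1/t_n)^{(2-\alpha)/r}$, and the minimum of the two exponents yields the first branch. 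For $2N+1\le n\le 2KN$, the $k=1$ and $k=2N+1$ pieces generate the $\rho_1^{1+\alpha}(t_n-\tau)^{-\alpha}$ term, while the smooth cells after $\tau$, together with the graded-mesh bound on $\rho_k$ with $(t_k-\tau)$ in place of $t_k$, yield the $\rho_1^{(2-\alpha)/r}(t_n-\tau)^{(\alpha-2)/r+1}$ term; contributions from cells $2\le k\le 2N$ are then dominated by the $k=1$ term using $t_n-t\ge t_n-\tau$.

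The main obstacle will be the careful bookkeeping of the smooth-interval sums on the two pieces of the mesh. Bounding $\sum_{k=2}^{n}\int_{t_{k-1}}^{t_k}\omega_{1-\alpha}(t_n-t)\rho_k^{2}(t_k)^{\alpha-1}\,dt$ (and its analogue shifted by $\tau$) by a single power of $\rho_1/t_n$ requires splitting the index range at the point where $t_k\sim t_n/2$ (respectively $t_k-\tau\sim (t_n-\tau)/2$), converting the resulting integrals to Beta-function-type integrals via the substitution $t=t_n s$ (resp.\ $t-\tau=(t_n-\tau)s$), and balancing the exponents against the graded-mesh scaling; the subtle point is that the exponent $(2-\alpha)/r$ emerges exactly because $\rho_k^{2}t_k^{\alpha-1}\lesssim \rho_1^{2/r}t_k^{\alpha+1-2/r}$, so that the integrated sum is sharp precisely at this rate. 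The transitional cell $k=2N+1$ also requires verifying, as in estimate $\mathrm{II}$ of Lemma \ref{lem: Truncation error of J on the hierarchical grid}, that $\int_{t_{2N}}^{t_{2N+1}}\omega_{1-\alpha}(t_n-t)(t-\tau)^{\alpha-1}dt$ scales like $\rho_1^{1+\alpha}(t_n-\tau)^{-\alpha}$, which reduces to the same Beta-type computation performed there.
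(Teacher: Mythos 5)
Your plan diverges from the paper's proof in one decisive respect: you keep the truncation error in the form $\sum_{k}\int_{t_{k-1}}^{t_k}\omega_{1-\alpha}(t_n-t)\bigl(\tfrac{\nabla_t u^k}{\rho_k}-\partial_t u(t)\bigr)\,dt$ and then substitute pointwise bounds, i.e.\ you take absolute values inside each cell. This discards the fact that $\partial_t u-\nabla_t u^k/\rho_k$ has zero mean over $(t_{k-1},t_k)$, which is precisely what the paper exploits by integrating by parts to pass to the kernel $(t_n-t)^{-\alpha-1}$ acting on $u-u_L$ in \eqref{eqs250901C}--\eqref{eqs250828D}. Without that step every cell loses a factor of order $\rho_k/(t_n-t_k)$ and the claimed rates are unreachable. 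Concretely, on a uniform mesh with $n=2N$ your smooth-cell sum is $\sum_{k=2}^{2N}\rho\,t_k^{\alpha-2}\int_{t_{k-1}}^{t_k}(t_n-t)^{-\alpha}dt\simeq\rho\int_{t_1}^{t_n}t^{\alpha-2}(t_n-t)^{-\alpha}dt\simeq(\rho/t_n)^{\alpha}$, strictly weaker than the asserted $(\rho_1/t_n)^{\min\{2-\alpha,\alpha+1\}}$ since $\alpha<\min\{2-\alpha,\alpha+1\}$; the same loss occurs in the initial cell, where $\int_0^{t_1}(t_n-t)^{-\alpha}t^{\alpha-1}dt\simeq(\rho_1/t_n)^{\alpha}$, not the $(\rho_1/t_n)^{\alpha+1}$ you assert. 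To salvage your route you would have to subtract $\omega_{1-\alpha}(t_n-t_{k-1})$ from the kernel on each cell before estimating, which is just the paper's integration by parts in disguise.

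There are also two regularity slips. By \eqref{eqs250719A}, $\partial_t u$ itself is bounded near $t=\tau$; it is $\partial_t^2 u$ that behaves like $(t-\tau)^{\alpha-1}$ there, by \eqref{eqs250719B}. So mirroring \eqref{eq:m(s)2} (which concerns the shifted function $u(t-\tau)$, whose \emph{first} derivative is singular at $t=\tau$) on the cell $(t_{2N},t_{2N+1})$ is not the right move. The paper instead uses Taylor's formula with integral remainder on $\partial_t^2 u$ to get $\bigl|\partial_s u(s)-\nabla_t u^{2N+1}/\rho_{2N+1}\bigr|\lesssim\rho_1^{\alpha}$; your weaker bound $(t-\tau)^{\alpha-1}$ would only yield $\rho_1^{\alpha}(t_n-\tau)^{-\alpha}$ for this cell, a factor $\rho_1$ short of the target $\rho_1^{1+\alpha}(t_n-\tau)^{-\alpha}$. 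Finally, on the pre-$\tau$ smooth cells the correct pointwise bound is $\rho_k\sup|\partial_t^2 u|\lesssim\rho_k t_k^{\alpha-2}$, not $\rho_k t_k^{\alpha-1}$ as you wrote.
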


\begin{proof}
%当$n\in[4N+1,KN]$时，利用分部积分公式，
When \(2N + 1 \leq n \leq 2KN\), splitting \( \left| \left( D^{\alpha} -{_0^C}D_t^{\alpha}\right)u^n \right|\) into following three parts
\begin{equation}\label{eqs250901C}
\begin{aligned}
    \left| \left( D^{\alpha} -{_0^C}D_t^{\alpha}\right)u^n \right| &= \left| \frac{1}{\Gamma(1-\alpha)} \sum_{k=1}^n \int_{t_{k-1}}^{t_k} (t_n - t)^{-\alpha} \left( \partial_t u(t)-\frac{\nabla_t u^k}{\rho _k} \right) dt \right| \\
    &= \left| \frac{\alpha}{\Gamma(1-\alpha)} \sum_{k=1}^n \int_{t_{k-1}}^{t_k} (t_n - t)^{-\alpha-1} (u - u_L) dt \right| \\
    &\lesssim \left|\sum_{k=1}^2 \int_{t_{2(k-1)N}}^{t_{2(k-1)N+1}}(t_n-t) ^{-\alpha-1}(u - u_L)dt \right|+ \left|\sum_{k=2}^{2N }\int_{t_{k-1}}^{t_k} (t_n-t) ^{-\alpha-1}(u - u_L)dt\right|\\
    &\quad+\left| \sum_{k=2N+2}^n \int_{t_{k-1}}^{t_k} (t_n-t) ^{-\alpha-1}(u - u_L)dt\right|,
\end{aligned}
\end{equation}
where integration by parts is used, and $u_L$ denotes the linear Lagrange interpolation function of $u$ on $t\in[t_{k-1}, t_k],k\geq1$.
It is easy to check that \eqref{eqs250901C} can be rewritten as
\begin{equation}\label{eqs250828D}
\begin{aligned}
\left| \left( D^{\alpha} -{_0^C}D_t^{\alpha}\right)u^n \right|&\lesssim \left|\sum_{k=1}^2 \int_{t_{2(k-1)N}}^{t_{2(k-1)N+1}}(t_n-t) ^{-\alpha-1}\int_{t_{2(k-1)N+1}}^{t}\left( \partial_su(s)-\frac{\nabla_t u^{{2(k-1)N+1}}}{\rho _{2(k-1)N+1}}\right)dsdt \right|\\
    &\quad + \left|\sum_{k=2}^{2N }\int_{t_{k-1}}^{t_k} (t_n-t) ^{-\alpha-1}\int_{t_k}^{t}\left( \partial_su(s)-\frac{\nabla_t u^k}{\rho _k}\right)dsdt\right|\\
    &\quad+\left| \sum_{k=2N+2}^n \int_{t_{k-1}}^{t_k} (t_n-t) ^{-\alpha-1}\int_{t_k}^{t}\left( \partial_su(s)-\frac{\nabla_t u^k}{\rho _k}\right)dsdt\right|\\
    &:= \mathrm{IV}+\mathrm{V}+\mathrm{VI}.
\end{aligned}
\end{equation}

Note that
\begin{equation}\label{eq250906b}
\begin{aligned}
\mathrm{IV}\leq&\left| \int_{t_{0}}^{t_{1}}(t_n-t) ^{-\alpha-1}\int_{t_{1}}^{t}\left( \partial_su(s)-\frac{\nabla_t u^{{1}}}{\rho _{1}}\right)dsdt \right|\\
 &+ \left| \int_{t_{2N}}^{t_{2N+1}}(t_n-t) ^{-\alpha-1}\int_{t_{2N+1}}^{t}\left( \partial_su(s)-\frac{\nabla_t u^{{2N+1}}}{\rho _{2N+1}}\right)dsdt \right|.
\end{aligned}
\end{equation}
From (\ref{eq:psi-definitions}), the first term on the right hand side of (\ref{eq250906b}) yields
\begin{equation}\label{eqs250903B}
\begin{aligned}
    &\left| \int_{t_{0}}^{t_{1}}(t_n-t) ^{-\alpha-1}\int_{t_{1}}^{t}\left( \partial_su(s)-\frac{\nabla_t u^{{1}}}{\rho _{1}}\right)dsdt \right|\\
    \lesssim& \int_{t_{0}}^{t_{1}}(t_{n}-t)^{-\alpha - 1}t^{\alpha - 1}(t_{1}-t) \psi_{1}dt.
\end{aligned}
\end{equation}
By (\ref{eqs250719B}) and Taylor's formula with the integral form of the remainder, one has
\[
\begin{aligned}
    \left| \partial_su(s)-\frac{\nabla_t u^{{2N+1}}}{\rho _{2N+1}}\right|
    &\leq\frac{1}{\rho_1}\left(\int_{t_{2N}}^{s}|\partial^2_yu(y)|(y-t_{2N})dy
    +\int_{s}^{t_{2N+1}}|\partial^2_yu(y)|(t_{2N+1}-y)dy\right)\\
    &\leq\left(\int_{t_{2N}}^{s}(y-\tau)^{\alpha-1}dy
    +\int_{s}^{t_{2N+1}}(y-\tau)^{\alpha-1}dy\right)\\
    &\lesssim\rho_1^\alpha,\quad s\in (t_{2N},t_{2N+1}).
\end{aligned}
\]
Therefore, the second term on the right hand side of (\ref{eq250906b}) can be bounded as
\begin{equation}\label{eqs250903C}
\begin{aligned}
    &\left| \int_{t_{2N}}^{t_{2N+1}}(t_n-t) ^{-\alpha-1}\int_{t_{2N+1}}^{t}\left( \partial_su(s)-\frac{\nabla_t u^{{2N+1}}}{\rho _{2N+1}}\right)dsdt \right|\\
    \lesssim& \rho_1^\alpha \int_{t_{2N}}^{t_{2N+1}}(t_n-t) ^{-\alpha}(t_n-t)^{-1} (t_{2N+1}-t)dt\\
    \lesssim&\rho_1^\alpha \int_{t_{2N}}^{t_{2N+1}} (t_{n}-t)^{-\alpha}dt\\
    \lesssim& \rho _1^{1+\alpha}(t_n-\tau)^{-\alpha},
\end{aligned}
\end{equation}
where $t_n-t_{2N}\lesssim t_n-t_{2N+1}$ for $n\geq2N+2$ is used.
%and if $$, then
%\begin{equation}\label{eqs250903D}
%    \begin{aligned}
%        &\left| \int_{t_{2N}}^{t_{2N+1}}(t_{2N+1}-t) ^{-\alpha-1}\int_{t_{2N+1}}^{t}\left( \partial_su(s)-\frac{\nabla_t u^{{2N+1}}}{\rho _{2N+1}}\right)dsdt \right|\\
%        \lesssim& \rho_1^\alpha \int_{t_{2N}}^{t_{2N+1}}(t_{2N+1}-t) ^{-\alpha-1} (t_{2N+1}-t)dt\\
%        %\lesssim&\rho_1^\alpha \int_{t_{2N}}^{t_{2N+1}} (t_{2N+1}-t)^{-\alpha}dt\\
%        \lesssim& \rho _1^{1+\alpha}(t_{2N+1}-\tau)^{-\alpha}.
%    \end{aligned}
%\end{equation}
%thus \(IV\lesssim \left(\frac{\rho_1}{t_n}\right)^{\alpha+1}+ \rho_1\).
The above discussion implies that
\begin{equation}\label{eqs250908A}
\begin{aligned}
\mathrm{IV}\leq \int_{t_{0}}^{t_{1}}(t_{n}-t)^{-\alpha - 1}t^{\alpha - 1}(t_{1}-t) \psi_{1}dt+\rho _1^{1+\alpha}(t_n-\tau)^{-\alpha}.
\end{aligned}
\end{equation}
When $k\neq1,2N+1$,
according to Lagrange's mean value theorem, it is clear that
\begin{equation}\label{eqs250901B}
    \begin{aligned}
    \left|\partial_su(s)-\frac{\nabla_t u^k}{\rho _k}\right|\leq \rho_k\partial_t^2u(\xi_k),
    \end{aligned}
\end{equation}
where $s, \ \xi_k \in(t_{k-1},t_k)$. By employing (\ref{eq:psi-definitions}) and \eqref{eqs250901B},
it indicates that $\mathrm{V}$ and $\mathrm{VI}$ can be bounded by
\begin{equation}\label{eqs250901D}
\begin{aligned}
\mathrm{V} &\lesssim \sum_{k = 2}^{2N}\int_{t_{k - 1}}^{t_{k}}(t_{n}-t)^{-\alpha - 1}\rho _k^2 t_k^{-1-\alpha}t_k^{1+\alpha}|\partial_t^2u(\xi_k)|\,dt\\
&\lesssim \int_{t_{1}}^{t_{2N}}(t_{n}-t)^{-\alpha - 1}t^{-\frac{\alpha}{r}-1}\,dt \times \max_{2 \leq k\leq 2N}\left\{\rho_{k}^{2}t_{k}^{\frac{\alpha}{r}-\alpha}\psi_{k}\right\},
\end{aligned}
\end{equation}
and
\begin{equation}\label{eqs250903A}
\begin{aligned}
\mathrm{VI} &\lesssim \left| \sum_{k=2N+2}^{n} \int_{t_{k-1}}^{t_k} (t_n - t)^{-\alpha - 1} \int_{t_k}^{t} \left( \partial_s u(s) - \frac{\nabla_t u^k}{\rho_k} \right) ds dt \right| \\
&\lesssim \sum_{k=2N+2}^{n-1} \int_{t_{k-1}}^{t_k} (t_n - t)^{-\alpha - 1} \rho_k^2 |\partial_t^2 u(\xi_k)| dt + \rho_n^2 \int_{t_{n-1}}^{t_n} (t_n - t)^{-\alpha - 1} \left( \frac{t_n - t}{\rho_n} \right) |\partial_t^2 u(\xi_n)| dt \\
&\lesssim \int_{t_{2N+1}}^{t_n} (t_n - t)^{-\alpha - 1} (t - \tau)^{-\frac{\alpha}{r} - 1} \min\left\{ 1, \frac{t_n - t}{\rho_n} \right\} dt \times \max_{2N+1 \leq k \leq n} \left\{ \rho_k^2 (t_k - \tau)^{\frac{\alpha}{r} - \alpha} \psi_k \right\}.
\end{aligned}
\end{equation}

From \cite[The proof of Lemma 7]{Tan}, the following inequalities hold
\begin{equation}\label{eqs250827A}
\begin{aligned}
    \int_{t_0}^{t_{1}}(t_{n}-t)^{-\alpha - 1}t^{\alpha - 1}(t_{1}-t) dt
\lesssim \left(\frac{\rho_1}{t_n}\right)^{\alpha+1},\\
\end{aligned}
\end{equation}
\begin{equation}\label{eqs250827B}
\begin{aligned}
\int_{t_{1}}^{t_{2N}}(t_{n}-t)^{-\alpha - 1}t^{-\frac{\alpha}{r}-1}\,dt
\lesssim \rho_1^{-\frac{\alpha}{r}}t_n^{-\alpha-1}, \\
\end{aligned}
\end{equation}
\begin{equation}\label{eqs250827C}
\begin{aligned}
\int_{t_{2N+1}}^{t_n} (t_n - t)^{-\alpha - 1}(t - \tau)^{-\frac{\alpha}{r} - 1}\min\left\{1,\frac{t_n - t}{\rho_n}\right\}dt \lesssim \rho_1^{-\frac{\alpha}{r}} (t_n - \tau)^{-\alpha - 1}.
\end{aligned}
\end{equation}
By replacing (\ref{eqs250908A}), (\ref{eqs250901D}) and (\ref{eqs250903A}) with these inequalities, and using Lemma \ref{lem: grade psi approximate value}, we can easily obtain the following estimates
\begin{equation}\label{eq250906c}
\begin{aligned}
\mathrm{IV}\lesssim \left(\frac{\rho_1}{t_n}\right)^{\alpha+1}
+\rho_1^{1+\alpha}(t_n-\tau)^{-\alpha},
\end{aligned}
\end{equation}
\begin{equation}\label{eq250906d}
\begin{aligned}
\mathrm{V}\lesssim& \rho_1^{-\frac{\alpha}{r}}t_n^{-\alpha-1}\rho_{1}^{\frac{2}{r}} \max_{1\leq k\leq2N}t_{k}^{\alpha+\frac{\alpha - 2}{r}+1}\\
\lesssim& \left(\frac{\rho_{1}}{t_{n}}\right)^{\min\left\{\frac{2 - \alpha}{r},\alpha + 1\right\}}
\end{aligned}
\end{equation}
and
\begin{equation}\label{eq250906e}
\begin{aligned}
\mathrm{VI}\lesssim&  \max_{2N+1 \leq k \leq n} \left\{ \rho_k^2 (t_k - \tau)^{\frac{\alpha}{r} + \alpha}  \right\} \rho_1^{-\frac{\alpha}{r}} (t_n - \tau)^{-\alpha - 1}\\
\lesssim& \rho_1^{\frac{2-\alpha}{r}}(t_n-\tau)^{\frac{\alpha-2}{r}+1}.
\end{aligned}
\end{equation}
Now, in virtue of (\ref{eq250906c})--(\ref{eq250906e}) and (\ref{eqs250828D}), we can conclude that for \(2N + 1 \leq n \leq 2KN\), the local truncation error of L1 formula satisfies
\begin{equation}\label{eqs250828A}
\begin{aligned}
\left| \left( D^{\alpha} - {_0^C}D_{t}^{ \alpha} \right) u^{n} \right|
\lesssim \rho_1^{1+\alpha}(t_n-\tau)^{-\alpha}+\rho_1^{\frac{2-\alpha}{r}}(t_n-\tau)^{\frac{\alpha-2}{r}+1}.
\end{aligned}
\end{equation}
Besides, from the above discussion, it is easy to find that
\begin{equation}\label{eqs250828B}
    \left| \left( D^{\alpha} - {_0^C}D_{t}^{ \alpha} \right) u^{n} \right| \lesssim \left( \frac{\rho_{1}}{t_{n}} \right)^{\min\left\{ \frac{2 - \alpha}{r}, \alpha + 1 \right\}} \ \text{for}  \ 1\leq n \leq 2N.
\end{equation}
Thus, the combination of \eqref{eqs250828A} and \eqref{eqs250828B} can obtain
 (\ref{eq:truncation error of D on the symmetrical grid}) immediately.
%\begin{comment}
%    \begin{equation*}
%    \left|({D}^{\alpha}-{_0^C}D_{t}^{1 - \alpha})u^{n}\right| \lesssim
%    \begin{cases}
%        \begin{aligned}
%            &\left(\frac{\rho_{1}}{t_{n}}\right)^{\min\{\frac{2 - \alpha}{r},\alpha + 1\}}, & n &\in [1, 2N] ,\\
%            &\left(\frac{\rho_{1}}{t_{n}}\right)^{\min\{\frac{2 - \alpha}{r},\alpha + 1\}}+\left(\frac{\rho_{1}}{t_{n}}\right)^{\alpha + 1} \lesssim \left(\frac{\rho_{1}}{t_{n}}\right)^{\min\{\frac{2 - \alpha}{r},\alpha + 1\}}, & n &\in [2N + 1, 4N], \\
%            &\left(\frac{\rho_{1}}{t_{n}^{(l)}}\right)^{\alpha + 1}+\left(\frac{\rho_{1}}{t_{n}}\right)^{\min\{\frac{2 - \alpha}{r},\alpha + 1\}} + \left(\frac{\rho_{1}}{t_{n}-\tau}\right)^{\min\{\frac{2 - \alpha}{r},\alpha + 2\}}(t_{n}-\tau), & n &\in [4N + 1, KN].
%        \end{aligned}
%    \end{cases}
%\end{equation*}
%\end{comment}
\end{proof}

Now we develop the fully discrete finite element scheme. Let $X_h$ be the continuous piecewise linear
finite element space in $H_0^1(\Omega)$
under a quasi-uniform mesh of $\Omega$
with a maximum diameter of $h$, and define
\(
(u,v)=\int_\Omega uvdx, ||u||=(u,v)^{\frac{1}{2}}.\)
Based on (\ref{eq:discretization of Caputo}), (\ref{eq:J825}) and the transformed form
(\ref{eq:After Integration}),
we propose the following fully discrete finite element scheme to solve the considered problem:
find $u_h^n\in X_h$ such that
\begin{equation}\label{eq:grade fully discrete scheme}
 (D^\alpha u_h^n,v_h)+B(u_h^n,v_h)=(bJ^{1-\alpha}u_h^{n-2N},v_h)+(G^n,v_h),\forall v_h\in X_h,
\end{equation}
where $u_h^{n-2N}=\varphi_h^{n-2N}\in X_h,1\leq n\leq 2N$ is a suitable approximation of $\varphi(x,t_{n-2N})$ and
\(B(u,v)=p(\frac{du}{dx},\frac{dv}{dx})-(au,v)\).
It is worth noting that the bilinear form satisfies the following well-known property
\begin{equation}\label{eq: B is a non-negative quantity}
    ||u||_{H_0^1(0,L)}^2 \lesssim  B(u,u).
\end{equation}

By observing \eqref{eq:250906a}, we note that when $r=1$,
the symmetric graded mesh \eqref{eq:250906a} will degenerate
into a uniform time mesh. In this work,
we will discuss the stability of the numerical scheme \eqref{eq:grade fully discrete scheme}
based on the uniform time mesh and the symmetric graded mesh,
and provide the local temporal error estimate for the case $r=1$ and the global temporal error estimate for the case $r>1$, respectively.

\section{Stability and convergence under uniform time mesh}
Based on the uniform time mesh, we discuss the stability of the fully discrete finite element scheme (\ref{eq:grade fully discrete scheme}) and provide its local time error estimate in this section.
Since the uniform grid is employed, the weight coefficient ‌$a^{(n)}_{n-k}$‌ defined in (\ref{eqs250829A}) is independent of the superscript $‌n$‌, and all $\rho_k$‌‌ values are identical. Therefore, we denote $\rho_k$‌ as ‌$\rho$‌ and ‌$a^{(n)}_{n-k}$‌ as ‌‌$a_{n-k}$‌ in this section.
%\subsection{Stability analysis}
Define $P_k,k=0,1,\cdots, n-1$ by \cite{Bu-2025-Local}
\begin{equation}\label{eq:8}
       P_0 = \frac{1}{a_0}, \quad P_{n-k} = \frac{1}{a_0} \sum_{j=k+1}^n P_{n-j} \left( a_{j-(k+1)} - a_{j-k} \right), \quad 1 \leq k \leq n-1.
\end{equation}
% (\ref{eq:8})式定义的$ P_l ,l=0,1\cdots $有如下两个性质：
%the $P_l, \, l = 0, 1, \cdots$ defined in (\ref{eq:8}) possess some useful properties that will be utilized later.
Now we introduce the following useful properties.
\begin{lemma}\cite{Bu-2025-Local,Dongfang}\label{lem:Properties of P}
For $P_k$ defined by (\ref{eq:8}), we have
\[\sum\limits_{j=1}^n P_{n-j} \leq \omega_{1+\alpha}(t_n),\]
\begin{equation*}
     \sum_{j=k}^n P_{n-j} a_{j-k} = 1, \quad 1 \leq k \leq n.
\end{equation*}
% \begin{equation*}
%      0  < P_i < \Gamma(2 - \alpha) \rho^\alpha (i + 1)^{\alpha - 1}, \quad 0 \leq i \leq n-1.
% \end{equation*}
What's more, for $2(i-1)N+1\leq n\leq 2iN$, $q=0,1,\cdots, i-1$ and $1\leq i \leq K$, it holds
\begin{equation*}
    \sum_{j=2qN+1}^n P_{n-j} (j - 2qN)^{-\beta} \lesssim \Gamma(2-\alpha) \rho^\alpha K_{\beta,n} (n - 2qN)^{\alpha - 1}, \quad \beta \geq 1,
\end{equation*}
% \begin{equation*}
%      \frac{1}{\Gamma(1-\beta)} \sum_{j=2qN+1}^n P_{n-j} (j - 2qN)^{-\beta} \lesssim \frac{\Gamma(2-\alpha) \rho^\alpha}{\Gamma(1-\beta + \alpha)} (n - 2qN)^{\alpha - \beta}, \quad \beta < 1,
% \end{equation*}
where
\(K_{\beta,n} =
\begin{cases}
1 + \dfrac{1 - n^{1 - \beta}}{\beta - 1}, & \beta \neq 1, \\
1 + \ln n, & \beta = 1,
\end{cases}
\)
is a strictly decreasing function about $\beta$ and one has $\lim\limits_{\beta\xrightarrow{}1}K_{\beta,n}=K_{1,n}$ for each n.
\end{lemma}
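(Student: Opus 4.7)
The plan is to treat the three claims separately, after the key observation that although the notation $P_{n-k}$ in (\ref{eq:8}) suggests dependence on $n$, reindexing with $\ell = n-k$ and $m = j - (n-\ell)$ reduces the recurrence to
\[
P_0 = \frac{1}{a_0}, \qquad a_0 P_\ell = \sum_{m=1}^{\ell} P_{\ell-m}(a_{m-1}-a_m), \quad \ell \geq 1,
\]
so that $\{P_\ell\}_{\ell \geq 0}$ is a single-indexed sequence determined entirely by the uniform L1 weights $\{a_j\}$. All three claims thus become statements about this fixed sequence.

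For the convolution identity $\sum_{j=k}^{n} P_{n-j} a_{j-k} = 1$, I would set $S_{\ell} := \sum_{m=0}^{\ell} P_{\ell-m} a_m$, which equals the stated sum with $\ell = n-k$, and induct on $\ell$. The base case $S_0 = P_0 a_0 = 1$ is immediate; for the step, the recurrence rearranges as $a_0 P_{\ell} + \sum_{m=1}^{\ell} P_{\ell-m} a_m = \sum_{m=1}^{\ell} P_{\ell-m} a_{m-1}$, and reindexing $m' = m - 1$ on the right yields $S_{\ell-1}$, so $S_{\ell} = S_{\ell-1}$ and hence $S_{\ell} \equiv 1$.

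Both remaining inequalities rest on the sharp pointwise decay $P_m \lesssim \Gamma(2-\alpha)\rho^{\alpha}\, m^{\alpha-1}$ for $m \geq 1$, which I would establish either by induction on the recurrence (using positivity $a_{m-1}-a_m > 0$ and comparison with the continuous kernel $\omega_{\alpha}$) or via the generating-function representation $P(z) = [(1-z) A(z)]^{-1}$ with $A(z) = \sum_{j\geq 0} a_j z^j$, combined with a Tauberian analysis at $z = 1$. The first inequality $\sum_{j=1}^n P_{n-j} \leq \omega_{1+\alpha}(t_n)$ then follows by combining this decay with the convolution identity of the previous paragraph, refined by an inductive comparison against the L1 consistency relation at the monomial $t^{\alpha}/\Gamma(1+\alpha)$ (whose continuous Caputo derivative equals $1$), which supplies the sharp constant $1/\Gamma(1+\alpha)$.

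The weighted bound is the technical heart and I expect it to be the main obstacle. Setting $M = n - 2qN$ and $j' = j - 2qN$, the sum reduces to $\sum_{j'=1}^{M} P_{M-j'} (j')^{-\beta}$, which by the decay estimate is controlled by $\Gamma(2-\alpha)\rho^{\alpha}\sum_{j'=1}^{M-1}(M-j')^{\alpha-1}(j')^{-\beta}$ up to a boundary contribution at $j' = M$. Comparing with the Beta-type integral $\int_{1}^{M}(M-x)^{\alpha-1} x^{-\beta} \,dx$ and splitting at $x = M/2$, the left half satisfies $(M - j')^{\alpha-1} \lesssim M^{\alpha-1}$ and reduces to $M^{\alpha-1} \sum_{j'=1}^{M/2} (j')^{-\beta}$, which evaluates to precisely the correction factor $K_{\beta,M}$; since $\beta \geq 1 > \alpha$, the right half is of strictly lower order $M^{\alpha-1}$ and is absorbed. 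The monotonicity of $K_{\beta,n}$ in $\beta$ and the limit $\lim_{\beta\to 1^{+}} K_{\beta,n} = K_{1,n}$ follow from differentiating the explicit closed form and applying l'H\^opital's rule at $\beta = 1$. The principal difficulty is obtaining the exact correction $K_{\beta,n}$ with the clean factorization $\Gamma(2-\alpha)\rho^{\alpha}(n-2qN)^{\alpha-1}$, rather than a loose logarithmic factor, which requires careful bookkeeping at the interface $j' \sim M/2$ between the two regimes of the split.
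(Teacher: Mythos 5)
Note first that the paper itself gives no proof of this lemma: it is imported verbatim from \cite{Bu-2025-Local,Dongfang}, so there is no in-paper argument to compare against. Judged on its own, your plan is structurally sound and in fact mirrors what those references do. The reindexing observation (that on the uniform mesh $\{P_\ell\}$ is a single sequence determined by the recurrence $a_0P_\ell=\sum_{m=1}^{\ell}P_{\ell-m}(a_{m-1}-a_m)$) is correct, and your proof of the convolution identity $\sum_{j=k}^nP_{n-j}a_{j-k}=1$ by telescoping $S_\ell=S_{\ell-1}$ is complete and right. The reduction of the weighted sum to $\sum_{j'=1}^{M}P_{M-j'}(j')^{-\beta}$ with $M=n-2qN$, the split at $j'=M/2$, and the absorption of the right half (which only needs $\sum_m P_m\lesssim\rho^\alpha M^\alpha$, i.e.\ the first property) are all correct, and you rightly identify that the left half forces the pointwise tail decay $P_m\lesssim\Gamma(2-\alpha)\rho^\alpha m^{\alpha-1}$.

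That decay estimate, however, is the load-bearing step of the whole lemma and you only name two candidate strategies for it (induction on the recurrence, or a Tauberian argument) without carrying either out; as written this is a genuine gap rather than a proof. Relatedly, your route to the first inequality is circular in its cruder form: the pointwise decay alone yields only $\sum_{j}P_{n-j}\lesssim t_n^\alpha$ with an unspecified constant, whereas the stated bound is the clean inequality $\sum_{j=1}^nP_{n-j}\le\omega_{1+\alpha}(t_n)$. The standard way to get it (and the one you should make explicit) is independent of the decay estimate: take $v^k=\omega_{1+\alpha}(t_k)$, use the Chebyshev integral inequality on each cell (the kernel $\omega_{1-\alpha}(t_n-s)$ is increasing in $s$ while $\partial_s v$ is decreasing) to conclude $D^\alpha v^n\ge{}_0^C D_t^\alpha v(t_n)=1$, and then sum against $P_{n-j}$ and invoke the convolution identity to telescope $\sum_jP_{n-j}\le\sum_k\nabla_tv^k=\omega_{1+\alpha}(t_n)$. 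With that argument spelled out and with the decay bound actually proved (it is Lemma-level content of \cite{Dongfang}, not a one-line induction: the step requires handling the $m=\ell$ term via $P_0=\Gamma(2-\alpha)\rho^\alpha$ and comparing $\sum_m(\ell-m)^{\alpha-1}(a_{m-1}-a_m)$ with $a_0\ell^{\alpha-1}$), your outline closes.
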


Let
\begin{equation*}
\mathcal{H} =
|b|\rho
\begin{pmatrix}
\mathbf{O}_{(n -2 N)\times 2N} & \mathbf{U}_{(n -2 N)\times (n - 2N)} \\
\mathbf{O}_{2N\times 2N} & \mathbf{O}_{2N\times(n -2 N)}
\end{pmatrix},
\end{equation*}
where $2(i-1)N \leq n \leq 2iN$, $ i =1,2,\cdots, K$, $\mathbf{O}_{I\times J}$ is a zero matrix with $I$ rows and $J$ columns, and
$\mathbf{U}_{(n-2N)\times (n-2N)}$ is an upper triangular matrix whose element $u_{jk}$ satisfies \(u_{jk}=1\) for $j\leq k$.
From the definition of $\mathcal{H}$, it is easy to check that
$\mathcal{H}^q = \mathbf{O}$ for all $q \geq i$. Now we utilize $\mathcal{H}$ to obtain an useful result, and then employ it to investigate the stability of the fully discrete finite element scheme (\ref{eq:grade fully discrete scheme}).

\begin{lemma}\label{lem:A-1}
Let $\mathcal{A} = [\
\underbrace{1, 1, \cdots,\ 1}_n\ ]^T$ with $2(i-1)N \leq n \leq 2iN,  i =2,3, \cdots, K $. Then the first element of $\mathcal{H}^q \mathcal{A}$ denoted as $\mathcal{E}_{n-2qN}^{(q)}, q=1,2,\cdots,i-1$ has the following estimates
\begin{equation}\label{eq:A_1}
\mathcal{E}_{n-2qN}^{(q)}> \mathcal{E}_{j}^{(q)}>0,j=1,2, \cdots, n-2qN-1\quad \text{and} \quad
    \mathcal{E}_{n-2qN}^{(q)}\lesssim 1.
\end{equation}
\end{lemma}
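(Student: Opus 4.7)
The approach is to induct on $q$, exploiting the block structure of $\mathcal{H}$ to simultaneously track the shrinking support of $\mathcal{H}^q\mathcal{A}$ and the monotonicity of its entries. First I would verify the base case $q=1$ by direct multiplication: the top $(n-2N)$ rows of $\mathcal{H}$ consist of a zero block of width $2N$ followed by the upper-triangular block $U$ scaled by $|b|\rho$, while the bottom $2N$ rows vanish, so $(\mathcal{H}\mathcal{A})_j = |b|\rho(n-2N-j+1)$ for $1\le j\le n-2N$ and $0$ otherwise. The first entry $|b|\rho(n-2N)$ is positive, strictly dominates the remaining nonzero entries, and satisfies $|b|\rho(n-2N) \le |b|\tau(i-1) \le |b|\tau K \lesssim 1$ since $\rho=\tau/(2N)$ and $n\le 2iN$.

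For the induction step, I would set $w = \mathcal{H}^{q-1}\mathcal{A}$ and use the block structure to write
\[
(\mathcal{H}w)_j = |b|\rho \sum_{k=j+2N}^{n} w_k, \qquad 1\le j\le n-2N,
\]
with zero in the remaining entries. By the induction hypothesis $w_k=0$ for $k > n-2(q-1)N$, so the effective sum is over $j+2N\le k\le n-2(q-1)N$, which is empty precisely when $j > n-2qN$. Hence the nonzero support of $\mathcal{H}^q\mathcal{A}$ is exactly $\{1,\ldots,n-2qN\}$, and positivity on this range is inherited from positivity of $w$. The telescoping identity
\[
(\mathcal{H}w)_j - (\mathcal{H}w)_{j+1} = |b|\rho\, w_{j+2N} > 0, \qquad 1\le j\le n-2qN-1,
\]
gives strict monotonicity, so the first entry $\mathcal{E}_{n-2qN}^{(q)}$ dominates every other $\mathcal{E}_j^{(q)}$ on the nonzero support.

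For the uniform bound, let $C_q := \mathcal{E}_{n-2qN}^{(q)}$. Using $w_k \le C_{q-1}$ on the nonzero support of $w$, together with the fact that the defining sum contains at most $n-2qN \le 2(i-q)N$ terms, I obtain the recursion $C_q \le |b|\rho\cdot 2(i-q)N \cdot C_{q-1} = |b|\tau(i-q)C_{q-1}$. Unrolling from $C_0 = 1$ yields $C_q \le (|b|\tau)^q(i-1)(i-2)\cdots(i-q) \le (|b|\tau)^{K-1}(K-1)!$, a constant depending only on the fixed problem data $|b|,\tau,K$, which gives $\mathcal{E}_{n-2qN}^{(q)} \lesssim 1$ uniformly in $q,n,i$.

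The hard part will be this uniform bound: at first glance the recursion looks like it could amplify by a factor of order $(i-q)$ at each step and thus blow up with $q$, but the key observation is that $q$ is capped at $i-1\le K-1$, where $K$ is a fixed problem parameter, so the geometric/factorial accumulation is absorbed into the $\lesssim$ constant. The support-tracking and monotonicity parts reduce to straightforward bookkeeping once the block multiplication is written out explicitly.
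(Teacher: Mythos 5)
Your proof is correct and follows essentially the same route as the paper: an induction on $q$ using the explicit block multiplication of $\mathcal{H}$, with the telescoping/monotonicity argument identifying $\mathcal{E}_{n-2qN}^{(q)}$ as the dominant entry and the bound $|b|\rho(n-2qN)\leq |b|\tau(i-q)\lesssim 1$ controlling each step. Your explicit recursion $C_q\leq |b|\tau(i-q)C_{q-1}$, together with the observation that $q\leq i-1\leq K-1$ caps the accumulation of implied constants, makes rigorous a point the paper leaves implicit in its repeated use of $\lesssim 1$ inside the induction.
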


\begin{proof}
First we know that
\begin{equation*}
\begin{aligned}
    \mathcal{H}{\mathcal{A}}
&= |b|\rho
\left[
\begin{array}{ccccccc}
n - 2N , \  n - 2N - 1 ,  \ \cdots , 1 \ , 0 , \cdots , 0
\end{array}
\right]^T\\
&:=\left[
\begin{array}{cc}
     \mathcal{E}_{n-2N}^{(1)}, \, \mathcal{E}_{n-2N-1}^{(1)}, \ \cdots, \ \mathcal{E}_{1}^{(1)}, \ 0, \ 0, \ \cdots, \ 0   \\
\end{array}
\right]^T.
\end{aligned}
\end{equation*}
%其中\(|b|\rho (n-N)\lesssim C\).
It is clear that
%$\mathcal{E}_{l}^{(1)}=|b| \rho l \lesssim C,l=1,2, \cdots ,n-N$.
$\mathcal{E}_{n-2N}^{(1)}> \mathcal{E}_{j}^{(1)}>0,j=1,2,\cdots, n-2N-1 \ \text{and}\
\mathcal{E}_{n-2N}^{(1)}=|b| \rho (n - 2N) \lesssim 1$.
  Assume that (\ref{eq:A_1}) holds for $q=1,2,\cdots,r-1,  \ 2\leq r\leq i$, then
  %$\mathcal{E}_l^{(r-1)}\lesssim C,l=1,2,\cdots, n-(r-1)N,\ 2 \leq r \leq k$. Taking  $ q=r$, then
 \begin{equation*}
     \begin{aligned}
         \mathcal{H}^{r} \mathcal{A}
         &=\mathcal{H}\left[ \mathcal{E}_{n-2(r-1)N}^{(r-1)},  \ \mathcal{E}_{n-2(r-1)N-1}^{(r-1)} ,\cdots, \ \mathcal{E}_{1}^{(r-1)}, \ 0, \ 0, \ \cdots,0 \ \right]^T\\
        &=|b|\rho\left[ \sum_{j=1}^{n-2rN}\mathcal{E}_j^{(r-1)}, \ \sum_{j=1}^{n-2rN-1}\mathcal{E}_j^{(r-1)}, \  \cdots, \  \mathcal{E}_1^{(r-1)}, \ 0, \ 0, \cdots, \ 0 \    \right]^T\\
        &:=\left[ \mathcal{E}_{n-2rN}^{(r)}, \ \mathcal{E}_{n-2rN-1}^{(r)}, \ \cdots,  \mathcal{E}_{1}^{(r)},0 ,\  0, \ \cdots,0 \ \right]^T.
     \end{aligned}
 \end{equation*}
Thus, the first element of $\mathcal{H}^r \mathcal{A}$ satisfies
\begin{equation*}
\begin{aligned}
    \mathcal{E}_{n-2rN}^{(r)}
    &=|b|\rho \sum_{j=1}^{n-   2rN}\mathcal{E}_j^{(r-1)}\\
    &\lesssim |b|\rho (n-2rN)\mathcal{E}_{n-2rN}^{(r-1)}
    \lesssim 1,
\end{aligned}
\end{equation*}
and
\begin{equation*}
\begin{aligned}
\mathcal{E}_{n-2rN}^{(r)}&=|b|\rho \sum_{j=1}^{n-2rN}\mathcal{E}_j^{(r-1)}\\
&>|b|\rho \sum_{j=1}^{l}\mathcal{E}_{j}^{(r-1)}
=\mathcal{E}_{l}^{(r)}>0,
\end{aligned}
\end{equation*}
where $l=1,2, \cdots, n-2rN-1.$ Therefore the  mathematical induction implies that \eqref{eq:A_1} is correct.
%This completes the proof of the lemma.
\end{proof}

\begin{theorem}
    Let $u_h^n$ be the solution of (\ref{eq:grade fully discrete scheme}), it holds
\begin{equation} ||u_h^n||\lesssim \left(\left\| u_h^0 \right\| + \rho |b| \sum_{k=1}^{2N} \left\| \varphi_h^{k-2N} \right\|+ \frac{t_n^\alpha}{\Gamma(1+\alpha)} \max_{1 \leq j \leq n} \left\| G^j \right\|\right).
\end{equation}
\end{theorem}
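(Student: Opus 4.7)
The plan is to test the scheme \eqref{eq:grade fully discrete scheme} with $v_h = u_h^n$, invoke the coercivity \eqref{eq: B is a non-negative quantity} to drop the bilinear form, and apply Cauchy--Schwarz to the delay and source terms. This gives
\[
(D^\alpha u_h^n, u_h^n) \leq |b|\,\|J^{1-\alpha} u_h^{n-2N}\|\,\|u_h^n\| + \|G^n\|\,\|u_h^n\|.
\]
To reduce this to a scalar inequality for $\|u_h^n\|$, I would use the standard Abel-summation cancellation: expanding
\[
(D^\alpha u_h^n, u_h^n) = a_0 \|u_h^n\|^2 - a_{n-1}(u_h^0, u_h^n) - \sum_{k=1}^{n-1}(a_{n-k-1} - a_{n-k})(u_h^k, u_h^n),
\]
bounding each inner product by $\|u_h^k\|\|u_h^n\|$ (using $a_{n-1}>0$ and $a_{n-k-1}-a_{n-k}>0$ since the L1 weights are strictly decreasing), and reversing the Abel step leads to $(D^\alpha u_h^n, u_h^n) \geq \|u_h^n\|\,D^\alpha\|u_h^n\|$. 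Dividing by $\|u_h^n\|$ (the case $u_h^n = 0$ being trivial) and using the triangle inequality on \eqref{eq:J825} then produces the scalar inequality
\[
D^\alpha \|u_h^n\| \leq |b|\rho \sum_{k=1}^n a_{n-k}\|u_h^{k-2N}\| + \|G^n\|.
\]

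Next I would multiply this scalar inequality by $P_{n-j}\geq 0$ and sum over $j=1,\dots,n$. Swapping the order of summation and invoking the identity $\sum_{j=k}^n P_{n-j}a_{j-k}=1$ of Lemma~\ref{lem:Properties of P}, the left-hand side telescopes to $\|u_h^n\|-\|u_h^0\|$, the delay convolution collapses to $|b|\rho\sum_{k=1}^n\|u_h^{k-2N}\|$, and the source term is bounded via $\sum_{j=1}^n P_{n-j}\leq \omega_{1+\alpha}(t_n) = t_n^\alpha/\Gamma(1+\alpha)$. Splitting the $k$-sum into the history block ($1\leq k\leq 2N$, where $u_h^{k-2N}=\varphi_h^{k-2N}$) and the iterate block ($k\geq 2N+1$), and writing
\[
\mathcal{C} := \|u_h^0\| + |b|\rho \sum_{k=1}^{2N}\|\varphi_h^{k-2N}\| + \frac{t_n^\alpha}{\Gamma(1+\alpha)}\max_{1\leq j\leq n}\|G^j\|,
\]
I arrive at the delayed Gronwall-type inequality
\[
\|u_h^n\| \leq \mathcal{C} + |b|\rho \sum_{k=1}^{n-2N}\|u_h^k\|, \qquad n \geq 2N+1,
\]
with the trivial bound $\|u_h^n\|\leq \mathcal{C}$ already covering $1\leq n\leq 2N$.

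The final step is to resolve this delay recurrence by iteration. Collecting $\{\|u_h^k\|\}$ into a vector, the $2N$-delayed cumulative sum is precisely the action of the nilpotent matrix $\mathcal{H}$ defined above Lemma~\ref{lem:A-1}, so that after $q$ iterations the coefficient inherited by $\mathcal{C}$ on the first component is exactly $\mathcal{E}_{n-2qN}^{(q)}$. Because $\mathcal{H}^i=\mathbf{O}$ on the block $2(i-1)N < n\leq 2iN$, the recursion terminates after at most $i-1<K$ steps and produces
\[
\|u_h^n\| \leq \Big(1+\sum_{q=1}^{i-1}\mathcal{E}_{n-2qN}^{(q)}\Big)\mathcal{C}.
\]
Lemma~\ref{lem:A-1} bounds each $\mathcal{E}_{n-2qN}^{(q)}\lesssim 1$, so the prefactor is $O(1)$ uniformly in $n$, and the asserted estimate follows.

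The main obstacle I anticipate is the bookkeeping in the iterative step: one must verify carefully that the scalar delay recurrence really corresponds, term by term, to the powers of $\mathcal{H}$ acting on $\mathcal{A}$, and that the nilpotency $\mathcal{H}^K=\mathbf{O}$ legitimately truncates the series at a finite stage depending on which block contains $n$. The Abel-summation cancellation yielding $(D^\alpha u_h^n,u_h^n)\geq \|u_h^n\|\,D^\alpha\|u_h^n\|$ is technically routine but also requires watching the sign of the weight differences $a_{n-k-1}-a_{n-k}$ to guarantee that the Cauchy--Schwarz bounds point in the right direction.
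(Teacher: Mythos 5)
Your proposal is correct and follows essentially the same route as the paper: test with $u_h^n$, use coercivity and the inequality $(D^\alpha u_h^n,u_h^n)\geq \|u_h^n\|\,D^\alpha\|u_h^n\|$ (which the paper simply cites from an earlier work, while you sketch its Abel-summation proof), multiply by $P_{n-j}$ and sum to reach the delayed recurrence, then iterate the nilpotent matrix $\mathcal{H}$ and invoke Lemma~\ref{lem:A-1}. No substantive differences.
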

\begin{proof}
Taking $v_h=u_h^j$ into (\ref{eq:grade fully discrete scheme}) yields
\begin{equation}\label{eq:eqs25721C}
    (D^\alpha u_h^j, u_h^j) + B(u_h^j, u_h^j) = b(J^{1-\alpha}u_h^{j-2N},u_h^j) + (G^j,u_h^j).
\end{equation}
According to (\ref{eq: B is a non-negative quantity})
and the fact \cite[Lemma 4.1]{Bu-2025-Local}
\begin{equation}\label{eqs250731B}
(D^{\alpha}u_{h}^{j},u_{h}^{j})\geq(D^{\alpha}\|u_{h}^{j}\|\ ) \|u_{h}^{j}\|,
\end{equation}
(\ref{eq:eqs25721C}) gives
\begin{equation*}
\begin{aligned}
    (D^\alpha ||u_h^j||)||u_h^j||&\leq
    b(J^{1-\alpha}u_h^{j-2N},u_h^j) + (G^j,u_h^j)\\
    &\leq
    |b|\cdot||J^{1-\alpha}u_h^{j-2N}||\cdot||u_h^j||+\|G^j\|\cdot||u_h^j||\\
    &\leq|b|J^{1 - \alpha}\|u_{h}^{j-2N}\|\|u_h^j\| + \|G^{j}\|\|u_h^j\|,
\end{aligned}
\end{equation*}
i.e.,
\begin{equation}\label{eqs250728A}
    \sum_{k = 1}^{j}a_{j - k}\nabla_{t}\|u_{h}^{k}\| \leq |b|\rho\sum_{k = 1}^{j}a_{j - k}\|u_{h}^{k-2N}\| + \|G^{j}\|.
\end{equation}
Multiplying both sides of (\ref{eqs250728A}) by \( P_{n-j} \),
and summing over \( j \) from \( 1 \) to \( n \), one has
\begin{equation}\label{eqs250728B}
        \sum_{j = 1}^{n}P_{n - j}\sum_{k = 1}^{j}a_{j - k}\nabla_{t}\|u_{h}^{k}\| \leq |b|\rho\sum_{j = 1}^{n}P_{n - j}\sum_{k = 1}^{j}a_{j - k}\|u_{h}^{k-2N}\| + \sum_{j = 1}^{n}P_{n - j}\|G^{j}\|.
    \end{equation}
Interchanging the summation order of $j$ and $k$ in the above inequality,
and  applying the second property of Lemma \ref{lem:Properties of P},
(\ref{eqs250728B}) can be derived as
\begin{equation}\label{eqs25721D}
 \begin{aligned}
    \left\| u_h^n \right\| &\leq \left\| u_h^0 \right\| + \rho |b| \sum_{k=1}^{2N} \left\| \varphi_h^{k-2N} \right\| + \rho |b| \sum_{k=2N+1}^{n} \left\| u_h^{k-2N} \right\|\ + \frac{t_n^\alpha}{\Gamma(1+\alpha)} \max_{1 \leq j \leq n} \left\| G^j \right\|\\
    &=z_n+\rho |b| \sum_{k=2N+1}^{n} \left\| u_h^{k-2N} \right\|,
 \end{aligned}
\end{equation}
where $z_n= \left\| u_h^0 \right\| + \rho |b| \sum\limits_{k=1}^{2N} \left\| \varphi_h^{k-2N} \right\|+ \frac{t_n^\alpha}{\Gamma(1+\alpha)} \max\limits_{1 \leq j \leq n} \left\| G^j \right\|$.

Let \( U = (||u_h^n||, ||u_h^{n - 1}||, \ldots,|| u_h^1||)^T \).
Then (\ref{eqs25721D}) can be rewritten as
\begin{equation}\label{eqs25721E}
U\leq\mathcal{H}U+z_n\mathcal{A}.
\end{equation}
By using (\ref{eqs25721E}) repeatedly and noting that $\mathcal{H}^i=\mathbf{O}$, one has
\begin{equation}\label{eqs250728C}
    \begin{aligned}
U
%&\leq \mathcal{H}U + z_n \mathcal{A} \\
&\leq \mathcal{H}(\mathcal{H}U + z_n \mathcal{A}) + z_n \mathcal{A} \\
&= \mathcal{H}^2 U + \mathcal{H} z_n \mathcal{A}+z_n\mathcal{A}\\
%&= \mathcal{H}^2 U + \sum_{q=0}^{1} (\mathcal{H})^q z_n \mathcal{A} \\
&\leq \cdots \\
&\leq z_n\sum_{q=1}^{i-1} \mathcal{H}^q  \mathcal{A}+z_n\mathcal{A}.
\end{aligned}
\end{equation}
 Since the first elements of $U, \mathcal{H}^q  \mathcal{A}$ are $||u_h^n||$ and $\mathcal{E}_{n-2qN}^{(q)},$ respectively. The combination of (\ref{eqs250728C}) and Lemma \ref{lem:A-1} immediately yields
\begin{equation}\label{eqs250909A}
    ||u_h^n||\lesssim \left(\left\| u_h^0 \right\| + \rho |b| \sum_{k=1}^{2N} \left\| \varphi_h^{k-2N} \right\|+ \frac{t_n^\alpha}{\Gamma(1+\alpha)} \max_{1 \leq j \leq n} \left\| G^j \right\|\right).
\end{equation}
\end{proof}

It is not difficult to observe that the above proof implies the following discrete Gronwall inequality holds.
\begin{lemma}\label{lem:Gronwall}
Assume that $\{y^n\}$ and $\{\varPhi^n\}$ be given nonnegative sequence satisfying
%Consider a non-negative real sequence $y^n$, $|b|$ is a non-negative constant.
\begin{equation}\label{eq:gronwall condition}
    y^n \leq |b| \rho \sum_{k = 2N + 1}^{n} y^{k - 2N} + \varPhi^n, \quad 2(i - 1)N +1\leq n \leq 2iN, \quad i=2,3,\cdots K.
\end{equation}
Then
$$y^n \lesssim \varPhi^n. $$
%where \(\varPhi=y^0+|b| t_N\mathop{max}\limits_{1\le k\le N}y^{k-N}\).
\end{lemma}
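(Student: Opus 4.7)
The plan is to recycle, almost verbatim, the matrix-iteration argument carried out in the preceding stability theorem. First, I would set $Y = (y^n, y^{n-1}, \ldots, y^1)^T$ and $\mathcal{A} = (1, 1, \ldots, 1)^T$, and rewrite the hypothesis (\ref{eq:gronwall condition}) as the componentwise vector inequality
$$Y \leq \mathcal{H} Y + \varPhi^n \mathcal{A},$$
where $\mathcal{H}$ is the nilpotent matrix introduced just before Lemma \ref{lem:A-1}. Here we have freely replaced each $\varPhi^j$ ($j \leq n$) by $\varPhi^n$; this is legitimate either under an implicit monotonicity convention on $\{\varPhi^n\}$ or, equivalently, by first majorizing the forcing term by $\max_{1\leq j\leq n} \varPhi^j$ and absorbing this into the $\lesssim$ notation.

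The second step is to iterate this vector inequality $i-1$ times, exploiting the nilpotency $\mathcal{H}^i = \mathbf{O}$ established before Lemma \ref{lem:A-1}. This produces
$$Y \leq \varPhi^n \Bigl(\mathcal{A} + \sum_{q=1}^{i-1} \mathcal{H}^q \mathcal{A}\Bigr).$$
Reading off the first component and invoking Lemma \ref{lem:A-1}, which furnishes the uniform bound $\mathcal{E}^{(q)}_{n-2qN} \lesssim 1$ for every $1 \leq q \leq i-1$, immediately yields
$$y^n \leq \varPhi^n\Bigl(1 + \sum_{q=1}^{i-1} \mathcal{E}^{(q)}_{n-2qN}\Bigr) \lesssim \varPhi^n,$$
since $i \leq K$ is a fixed integer independent of $n$. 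The boundary regime $1 \leq n \leq 2N$ (i.e.\ $i=1$) is handled separately and trivially: the summation in (\ref{eq:gronwall condition}) is empty, so the desired bound is already there with constant $1$.

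I do not expect any serious obstacle. The only thing that requires care is the bookkeeping needed to translate the scalar stability estimate (\ref{eqs250728A}) into the abstract sequence setting: one must verify that premultiplying by the nonnegative matrix $\mathcal{H}$ preserves the direction of the inequality, and that the substitution $\varPhi^j \to \varPhi^n$ remains valid at every iteration step (which is clear once monotonicity, or majorization by the running maximum, is invoked). Once these routine points are settled, the conclusion transcribes directly from the chain (\ref{eqs25721E})--(\ref{eqs250909A}) of the preceding proof, simply with $y^n$ in place of $\|u_h^n\|$ and $\varPhi^n$ in place of $z_n$.
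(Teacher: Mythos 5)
Your proposal is correct and follows essentially the same route as the paper: the paper's proof also rewrites \eqref{eq:gronwall condition} as $Y\leq\mathcal{H}Y+\varPhi^n\mathcal{A}$ and then refers back to the iteration \eqref{eqs25721E}--\eqref{eqs250909A} together with Lemma \ref{lem:A-1}. Your explicit remark that one must either assume monotonicity of $\{\varPhi^n\}$ or majorize by $\max_{1\leq j\leq n}\varPhi^j$ when replacing $\varPhi^j$ by $\varPhi^n$ in the vector inequality is a point the paper glosses over, and is satisfied in all of its applications.
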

\begin{proof}
Let \(Y = \begin{bmatrix}y^n,y^{n - 1},\cdots,y^{n - 2N}\end{bmatrix}^T\).
   For \(2(i - 1)N+1\leq n\leq 2iN,i=2,3,\cdots, K\),
     (\ref{eq:gronwall condition}) can be rewritten as
\begin{equation}\label{eq:250731A}
    Y\leq \mathcal{H}Y + \varPhi^n\mathcal{A}.
\end{equation}
The subsequent procedure follows a pattern analogous to (\ref{eqs25721E})--(\ref{eqs250909A}), from which it is straightforward to derive
\[
y^n \lesssim \varPhi^n.
\]
The proof is completed.
\end{proof}

Define projection operator
 $R_h: H_0^1(\Omega) \xrightarrow{} X_h$ satisfying
\(B(R_hu,v)=B(u,v),\ \forall v\in X_h.\)
 When $u\in H_0^1(\Omega)\cap H^2(\Omega)$, it is obvious that \cite{Thomée2006Galerkin}
\begin{equation}\label{eq:h^2u}
    ||u-R_hu||\leq Ch^2||u||_{H^2(\Omega)}.
\end{equation}
With the help of the above lemma,
we can obtain the following error estimate.

\begin{theorem}\label{Them: Error results on the uniform grid}
Assume that $u$ be the solution of the
considered problem and satisfy the regularity (\ref{eqs250719A})--(\ref{eqs250719B}),
$u\in L^\infty(-\tau,K\tau;H^2(\Omega))$, $\partial_t u\in L^1(-\tau,K\tau;H^2(\Omega)),$
and $u^n_h, -2N\leq n\leq 0$ is a suitable approximation of $\varphi(x,t_n)$ such that
$\|u^n_h-\varphi(x,t_n)\|\lesssim h^2$. Then, under the uniform time mesh, the numerical solution $u_h^{n}$ satisfies
\begin{equation}\label{eq250906g}
 \left\lVert u^n - u^n_h \right\rVert\lesssim
\begin{cases}
    h^2  +\rho t_n^{\alpha-1}, \quad 1\leq n \leq 2N,\\
h^2 + \rho  ,\quad   2N+1\leq n\leq 2KN.
\end{cases}
\end{equation}
\end{theorem}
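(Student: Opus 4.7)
I would use the standard Ritz-projection splitting $u^n - u_h^n = \eta^n + \theta^n$ with $\eta^n := u^n - R_h u^n$ and $\theta^n := R_h u^n - u_h^n$. The assumption $u\in L^\infty(-\tau,K\tau;H^2(\Omega))$ combined with \eqref{eq:h^2u} immediately yields $\|\eta^n\|\lesssim h^2$, so the main task is estimating $\theta^n$.

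Testing \eqref{eq:After Integration} at $t=t_n$ against $v_h\in X_h$, inserting the discrete operators together with the truncation errors $R_1^n:=(D^\alpha-{}_0^C D_t^\alpha)u^n$ and $R_2^n:=({}_0I_t^{1-\alpha}-J^{1-\alpha})u^{n-2N}$, subtracting \eqref{eq:grade fully discrete scheme}, and using $B(\eta^n,v_h)=0$, I obtain
\begin{equation*}
(D^\alpha \theta^n,v_h) + B(\theta^n,v_h) = (b J^{1-\alpha}\theta^{n-2N},v_h) + (\tilde T^n,v_h), \quad \forall v_h\in X_h,
\end{equation*}
where $\tilde T^n := R_1^n + bR_2^n - D^\alpha\eta^n + bJ^{1-\alpha}\eta^{n-2N}$. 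Repeating the stability proof, i.e., choosing $v_h=\theta^j$, applying \eqref{eq: B is a non-negative quantity} and \eqref{eqs250731B} with Cauchy--Schwarz, weighting by $P_{n-j}$, summing over $j$, and using Lemma \ref{lem:Properties of P}, together with the assumption $\|u_h^n - \varphi(\cdot,t_n)\|\lesssim h^2$ for $-2N\le n\le 0$, gives
\begin{equation*}
\|\theta^n\| \le Ch^2 + \rho|b|\sum_{k=2N+1}^n \|\theta^{k-2N}\| + \sum_{j=1}^n P_{n-j}\|\tilde T^j\|.
\end{equation*}
The discrete Gronwall inequality (Lemma \ref{lem:Gronwall}) then reduces everything to obtaining a sharp bound on $\sum_{j=1}^n P_{n-j}\|\tilde T^j\|$.

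The projection pieces are routine: using $\|\eta^k-\eta^{k-1}\|\lesssim h^2\int_{t_{k-1}}^{t_k}\|\partial_s u\|_{H^2}ds$ with the telescoping identity $\sum_{j=k}^n P_{n-j}a_{j-k}=1$ from Lemma \ref{lem:Properties of P} yields $\sum_j P_{n-j}\|D^\alpha\eta^j\|\lesssim h^2\|\partial_t u\|_{L^1(H^2)}$; and $\|J^{1-\alpha}\eta^{j-2N}\|\lesssim h^2 t_j^{1-\alpha}$ combined with $\sum P_{n-j}\lesssim t_n^\alpha$ gives $\lesssim h^2 t_n\lesssim h^2$. For the L1 truncation with $j\le 2N$, Lemma \ref{lem: Truncation error of D on the hierarchical grid} with $r=1$ produces $\|R_1^j\|\lesssim j^{-\beta}$ where $\beta=\min\{2-\alpha,\alpha+1\}>1$, and the third property of Lemma \ref{lem:Properties of P} (with $q=0$) then delivers $\sum_{j=1}^{\min(n,2N)} P_{n-j}\|R_1^j\|\lesssim \rho^\alpha K_{\beta,n}\,n^{\alpha-1}=\rho t_n^{\alpha-1}K_{\beta,n}\lesssim \rho t_n^{\alpha-1}$ after using the uniform-mesh relation $n=t_n/\rho$. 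For $j\ge 2N+1$ the two pieces $\rho^{1+\alpha}(t_j-\tau)^{-\alpha}$ and $\rho^{2-\alpha}(t_j-\tau)^{\alpha-1}$ are each pointwise bounded by $\rho$ (using $t_j-\tau\ge\rho$), so they contribute at most $\rho\sum_j P_{n-j}\lesssim \rho t_n^\alpha$. Finally, Lemma \ref{lem: Truncation error of J on the hierarchical grid} gives $\|R_2^j\|\lesssim\rho$, contributing $\lesssim\rho t_n^\alpha$.

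It remains to repackage these bounds. When $n\le 2N$ one has $t_n\le\tau$, so $\rho t_n^\alpha\le\tau\cdot\rho t_n^{\alpha-1}$ and the estimate collapses to $h^2+\rho t_n^{\alpha-1}$; when $n>2N$, both $\rho t_n^{\alpha-1}$ and $\rho t_n^\alpha$ are $O(\rho)$ since $t_n\ge\tau$, yielding $h^2+\rho$. The main obstacle is the sharp evaluation of $\sum P_{n-j}\|R_1^j\|$ on $[0,\tau]$: one must verify $\min\{2-\alpha,\alpha+1\}>1$ for $0<\alpha<1$ in order to activate the $\beta\ge 1$ case of Lemma \ref{lem:Properties of P}'s third property with a bounded $K_{\beta,n}$, whereas the shifted pieces on $(\tau,K\tau]$ have exponents below $1$ and have to be controlled by the cruder estimate $\sum P_{n-j}\lesssim t_n^\alpha$; this suffices precisely because the target rate degrades from $\rho t_n^{\alpha-1}$ to the uniform $\rho$ there.
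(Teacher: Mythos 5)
Your proposal is correct and follows essentially the same route as the paper: the Ritz-projection splitting, the error equation tested with the discrete solution, the $P_{n-j}$-weighted summation via Lemma \ref{lem:Properties of P}, the truncation-error bounds from Lemmas \ref{lem: Truncation error of J on the hierarchical grid} and \ref{lem: Truncation error of D on the hierarchical grid} with $r=1$, and the discrete Gronwall inequality of Lemma \ref{lem:Gronwall}. If anything, your explicit tracking of the $\rho t_n^{\alpha-1}$ factor for $1\leq n\leq 2N$ (via $\rho^{\alpha}K_{\beta,n}n^{\alpha-1}=\rho t_n^{\alpha-1}K_{\beta,n}$ with $\beta=\min\{2-\alpha,\alpha+1\}>1$) is cleaner than the paper's treatment of that subcase, which simply asserts $\Phi^n\lesssim h^2+\rho$ there.
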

\begin{proof}
When \(2 N + 1 \leq n \leq 2KN \),
for $t = t_n$, (\ref{eq:After Integration}) gives
\begin{equation}\label{eq:semi-discrete}
    \left( {_0^C D_t^\alpha} u^n, v_h \right)+B(u^n,v_h) = b \left( {_0 I_t^{1 - \alpha}} u^{n - 2N}, v_h \right) + \left( G^n, v_h \right).
\end{equation}
By subtracting
\(\eqref{eq:grade fully discrete scheme}\) from \(\eqref{eq:semi-discrete}\),
one has
\begin{equation*}
    (D^\alpha (u^n-u_h^n),v_h)+B((u^n-u_h^n),v_h)=(bJ^{1-\alpha}(u^{n-2N}-u_h^{n-2N}),v_h)+((R_J^{1-\alpha})^{n-2N},v_h)- ((R_D^\alpha)^n,v_h),
\end{equation*}
where $(R_J^{1-\alpha})^{n-2N}:=(_0I_t^{1-\alpha}-J^{1-\alpha})u^{n-2N}\text{ and } (R_D^\alpha)^n:=({_0^C} D_t^\alpha-D^\alpha)u^n$.
Let $r_h^n=u^n-R_hu^n$ and $\varepsilon _h^n=R_hu^n-u_h^n$, the above equation can be written as
\begin{equation}\label{eqs250731C}
\begin{aligned}
(D^{\alpha} \varepsilon _h^n,v_h)+B(\varepsilon _h^n,v_h)=&
(b(R_J^{1-\alpha})^{n-2N},v_h)+(bJ^{1-\alpha}r_h^{n-2N},v_h)+(bJ^{1-\alpha}\varepsilon _h^{n-2N},v_h)\\
&-((R_D^\alpha)^n,v_h)-(D^\alpha r_h^n,v_h).
\end{aligned}
\end{equation}
Substituting $v_h=\varepsilon_h^n$
into (\ref{eqs250731C}), and using (\ref{eqs250731B}) and the Cauchy-Schwarz inequality, it yields
\begin{equation}\label{eqs250731D}
  \begin{aligned}
    D^\alpha||\varepsilon_h^n||&\leq |b|J^{1-\alpha}||\varepsilon_h^{n-2N}||+||(R_D^\alpha)^n||+|b|\cdot||(R_J^{1-\alpha})^{n-2N}||+|b|\cdot||J^{1-\alpha}r_h^{n-2N}||+||D^\alpha r_h^n||.
  \end{aligned}
  \end{equation}
Multiplying both sides of (\ref{eqs250731D}) by \( P_{n-j} \)
and summing over \( j \) from \( 1 \) to \( n \), it can be obtained that
\begin{equation}\label{eq250906f}
\begin{aligned}
    \sum_{j = 1}^{n}P_{n - j}\sum_{k = 1}^{j} a_{j - k}\nabla _t||\varepsilon_h^k ||\leq &|b|\rho\sum_{j = 1}^{n}P_{n - j}\sum_{k = 1}^{j} a_{j - k}||\varepsilon_h^{k - 2N}||+ |b|\sum_{j = 1}^{n} P_{n - j}  ||J^{1-\alpha}r_h^{j-2N}|| + \sum_{j = 1}^{n}P_{n - j}\|(R_D^{\alpha})^j\|\\
    &+\sum_{j = 1}^{n}P_{n - j}||D^\alpha r_h^j|| + |b|\sum_{j = 1}^{n}P_{n - j}\|(R_J^{1-\alpha})^{j - 2N}\|.
\end{aligned}
\end{equation}
Employing a derivation analogous to that from (\ref{eqs250728B}) to (\ref{eqs25721D}),
(\ref{eq250906f}) leads to
\begin{equation}\label{eqs250910A}
\begin{aligned}
   ||\varepsilon_h^n ||\leq &||\varepsilon_h^0||+ |b|\rho\sum_{k = 1}^{2N}||\varepsilon_h^{k - 2N}||+ |b|\rho\sum_{k = 2N + 1}^{n}||\varepsilon_h^{k - 2N}||
   + |b| \sum_{j = 1}^{n} P_{n - j} \|J^{1- \alpha} r_h^{j - 2N}\|\\
   & +\sum_{j = 1}^{n}P_{n - j}\|(R_D^{\alpha})^j\|+\sum_{j = 1}^{n} P_{n - j} \|D^{\alpha} r_h^{j}\| + |b|\sum_{j = 1}^{n}P_{n - j}\|(R_J^{1-\alpha})^{j - 2N}\|.
\end{aligned}
\end{equation}

Let
\[
\begin{aligned}
    \Phi^n=&\left( |b|\sum_{j = 1}^{n}P_{n - j}\|(R_J^{1-\alpha})^{j - 2N}\|+\sum_{j = 1}^{n}P_{n - j}\|(R_D^{\alpha})^j\| \right)\\
    &+\left(||\varepsilon_h^0||+ |b|\rho\sum_{k = 1}^{2N}||\varepsilon_h^{k - 2N}||+
   |b| \sum_{j = 1}^{n} P_{n - j} \|J^{1- \alpha} r_h^{j - 2N}\|+\sum_{j = 1}^{n} P_{n - j} \|D^{\alpha} r_h^{j}\|\right).
\end{aligned}
\]
Therefore (\ref{eqs250910A}) gives
\begin{equation}
\begin{aligned}
    ||\varepsilon_h^n ||\leq & |b|\rho\sum_{k = 2N + 1}^{n}||\varepsilon_h^{k - 2N}||+\Phi^n.
\end{aligned}
\end{equation}
According to Lemmas \ref{lem: Truncation error of J on the hierarchical grid}, \ref{lem: Truncation error of D on the hierarchical grid} and \ref{lem:Properties of P}, for $r=1$ it implies that
\begin{equation}\label{eqs250910B}
\begin{aligned}
    \sum_{j = 1}^{n} P_{n - j} \| (R_{J}^{1 - \alpha})^{j - 2N}\| &\lesssim \sum_{j = 1}^{n} P_{n - j} \rho
    \lesssim \rho,
\end{aligned}
\end{equation}
and
\begin{equation}\label{eqs250910C}
\begin{aligned}
    \sum_{j = 1}^{n} P_{n - j} \| (R_{t}^\alpha)^j \| &\lesssim \sum_{j = 1}^{2N} P_{n - j} \left(\frac{\rho}{t_j}\right)^{\min\{2 - \alpha, 1 + \alpha\}} + \sum_{j = 2N + 1}^{n} P_{n - j}  \left[\rho^{1+\alpha}(t_j-\tau)^{-\alpha} +
    \rho^{2 - \alpha} (t_j - \tau)^{\alpha - 1}\right] \\
    &= \sum_{j = 1}^{2N} P_{n - j} \left(\frac{\rho}{t_j}\right)^{\min\{2 - \alpha, 1 + \alpha\}} + \sum_{j = 2N + 1}^{n} P_{n - j}\left[\rho \left( \frac{\rho}{t_j - \tau} \right)^\alpha + \rho \left( \frac{\rho}{t_j - \tau} \right)^{1 - \alpha}\right]
    \\
    &\lesssim \sum_{j = 1}^{n} P_{n - j} j^{-\min\{2-\alpha, 1 + \alpha\}} +  \sum_{j = 2N + 1}^{n} P_{n-j} \rho  \\
    &\lesssim \rho^\alpha K_{\beta_1,n}n^{\alpha-1} +\rho\\
    &\lesssim \rho
\end{aligned}
\end{equation}
where $\beta_1={\min\{2-\alpha, 1 + \alpha\}}$.
From
(\ref{eq:discretization of Caputo}), (\ref{eq:J825}), (\ref{eq:h^2u}), Lemma \ref{lem:Properties of P} and the assumptions of Theorem \ref{Them: Error results on the uniform grid}, one has
\begin{equation}\label{eqs250910D}
||\varepsilon_h^0||+ |b|\rho\sum_{k = 1}^{2N}||\varepsilon_h^{k - 2N}||\lesssim h^2,
\end{equation}
\begin{equation}\label{eqs250901K}
 \begin{aligned}
&\sum_{j = 1}^{n} P_{n - j} \|J^{1- \alpha} r_h^{j - 2N}\|\\
\lesssim &\rho \sum_{k = 1}^{n} \left(\sum_{j = k}^{n} P_{n - j} a_{j - k} \right) \|(u - R_h u)(t_{k - 2N})\|\\
\lesssim& h^2,
\end{aligned}
\end{equation}
and
\begin{equation}\label{eqs250901J}
 \begin{aligned}
\sum_{j = 1}^{n} P_{n - j} \|D^{\alpha} r_h^{j}\| %&= \sum_{j = 1}^{n} P_{n - j} \left\|\sum_{k = 1}^{j} a_{j - k} \nabla_t (u - R_h u)(t_k)\right\|\\
&\lesssim \sum_{j = 1}^{n} P_{n - j} \sum_{k = 1}^{j} a_{j - k} \int_{t_{k - 1}}^{t_k} \|\partial_t (u - R_h u)\| dt\\
&= \sum_{k = 1}^{n} \left(\sum_{j = k}^{n} P_{n - j} a_{j - k} \right) \int_{t_{k - 1}}^{t_k} \|\partial_t (u - R_h u)\| dt\\
&\lesssim  h^2 \int_{0}^{t_n} \|\partial_t u\|_{H^2(\Omega)} dt\\
&\lesssim h^2.
\end{aligned}
\end{equation}
The estimations (\ref{eqs250910B})--(\ref{eqs250901J}) demonstrate $\Phi^n\lesssim\rho+h^2.$
Thus it follows from Lemma \ref{lem:Gronwall} that
\begin{align}\label{eq:v1}
    || \varepsilon _h^n||\lesssim h^2+\rho.
\end{align}
When \(1 \leq n \leq 2N\), following the same derivation method, we still can obtain error
equation (\ref{eqs250910A}), with the sole requirement that $\Phi^n$ reduces to
\[
\begin{aligned}
    \Phi^n=&\left( |b|\sum_{j = 1}^{n}P_{n - j}\|(R_J^{1-\alpha})^{j - 2N}\|+\sum_{j = 1}^{n}P_{n - j}\|(R_D^{\alpha})^j\| \right)\\
    &+\left(||\varepsilon_h^0||+
   |b| \sum_{j = 1}^{n} P_{n - j} \|J^{1- \alpha} r_h^{j - 2N}\|+\sum_{j = 1}^{n} P_{n - j} \|D^{\alpha} r_h^{j}\|\right).
\end{aligned}
\]
Similarly, as discussed above, it is clear that $\Phi^n\lesssim h^2+\rho.$ It means
\begin{equation}\label{eqs250910E}
||\varepsilon_h^n||\lesssim h^2+\rho.
\end{equation}
Lastly, due to \( \|u^n-u_h^n\| \leq| |r_h^n||+|| \varepsilon  _h^n||\), the combination of \eqref{eq:h^2u}, \eqref{eq:v1} and \eqref{eqs250910E},
it can be concluded that (\ref {eq250906g}) holds.
\end{proof}

\section{Stability and convergence under non-uniform time mesh}
In this section, we analyze the stability of the numerical scheme (\ref{eq:grade fully discrete scheme}) and its convergence based on the symmetric graded time mesh (\ref{eq:250906a}) with $r>1$, providing globally optimal time error estimate.
First, we introduce the following classical discrete fractional Gronwall inequality.
\begin{lemma}\cite{Bu-2024-Finite,Liao-Sharp-2018}\label{lem:gronwall}
For given nonnegative sequences \(\{y_{j}, z_{j} \mid 1 \leq j \leq 2KN\}\) and \(\{\lambda_{j} \mid 0 \leq j \leq 2KN-1\}\). Suppose that there exists a constant \(\Lambda\) (independent of the step size) with \(\Lambda \geq \sum\limits_{j=0}^{2KN-1} \lambda_{j}\), and
$
\max_{1 \leq n \leq 2KN} \rho_{n} \leq \frac{1}{\sqrt[\alpha]{2\Gamma(2-\alpha)\Lambda}}.
$
If the grid function $\{v_{k} \mid k \geq 0\}$ satisfies
\begin{equation}
    \sum_{k=1}^{n} a_{n-k}^{(n)} \nabla_{t} (v_{k})^{2} \leq \sum_{k=1}^{n} \lambda_{n-k} (v_{k})^{2} + v_{n}(y_{n} + z_{n}), \quad 1 \leq n \leq 2KN,
\end{equation}
then
\[
v_{n} \leq 2E_{\alpha,1}\left(2\max(1,\sigma)\Lambda t_{n}^{\alpha}\right) \left(v_{0} + \max_{1 \leq k \leq n} \sum_{j=1}^{k} \overline{P}_{k-j}^{(k)} y_{j} + \omega_{1+\alpha}(t_{n}) \max_{1 \leq j \leq n} z_{j}\right),
\]
where $E_{\alpha,1}(\cdot) := \sum\limits_{i=0}^{\infty} \frac{(\cdot)^{i}}{\Gamma(i\alpha + 1)}, \sigma=max\{3^{r-1},2^r-1\}$ and
\begin{equation}\label{eq:Definition of P-GradingA}
\overline{P}_{k-j}^{(k)} :=
\begin{cases}
\dfrac{1}{a_{0}^{(k)}}, \quad j = k, \\
\dfrac{1}{a_{0}^{(j)}} \sum\limits_{i=j+1}^{k} \overline{P}_{k-i}^{(k)} \left(a_{i-(j+1)}^{(i)} - a_{i-j}^{(i)}\right), \quad 1 \leq j \leq k-1.
\end{cases}
\end{equation}
\end{lemma}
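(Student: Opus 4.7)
The plan is to linearize the quadratic inequality via a ``max'' trick, invert the L1 convolution using the kernel $\overline{P}_{k-j}^{(k)}$, and then close a discrete Gronwall iteration whose formal power series solution is precisely $E_{\alpha,1}$.

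First, I would set $V_n := \max_{0\le k\le n} v_k$ and pick $n^\ast\in\{1,\dots,n\}$ with $v_{n^\ast}=V_n$ (the case $V_n = v_0$ makes the conclusion trivial). Using the identity $\nabla_t(v_k^2)=(v_k+v_{k-1})\nabla_t v_k$, the hypothesis at index $n^\ast$ reads
$$
\sum_{k=1}^{n^\ast} a_{n^\ast-k}^{(n^\ast)}(v_k+v_{k-1})\nabla_t v_k \;\le\; \sum_{k=1}^{n^\ast}\lambda_{n^\ast-k}v_k^2 + v_{n^\ast}(y_{n^\ast}+z_{n^\ast}).
$$
The nontrivial step is to replace $(v_k+v_{k-1})$ by $2V_n$ on the left without losing the inequality. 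Following the argument behind Lemma 3.1 of \cite{Liao-Sharp-2018}, this is done by an Abel summation-by-parts that exploits positivity and monotonicity of $\{a_{n^\ast-k}^{(n^\ast)}\}_k$ together with the uniform bound on consecutive step-size ratios guaranteed by the symmetric graded mesh \eqref{eq:250906a}; this is exactly where the factor $\max(1,\sigma)$ with $\sigma=\max\{3^{r-1},2^r-1\}$ is forced into the final exponent. After dividing by $2V_n$, one is left with a linear L1-type inequality in the $v_k$.

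Second, I would invert the L1 operator on the left side: multiplying through by $\overline{P}_{n^\ast-j}^{(n^\ast)}$, summing $j$ from $1$ to $n^\ast$, swapping the order of summation, and using the defining identity $\sum_{i=j}^{k}\overline{P}_{k-i}^{(k)}a_{i-j}^{(i)}=1$, the inner convolution telescopes to $V_n-v_0$. Using $v_k^2\le V_n v_k\le V_n^2$ to eliminate the quadratic $\lambda$-term, the hypothesis $\sum_j\lambda_j\le\Lambda$, and the standard kernel bound $\sum_{j=1}^{k}\overline{P}_{k-j}^{(k)}\le\omega_{1+\alpha}(t_k)$ for the $z_j$ term, one arrives at an estimate of the schematic form
$$
V_n \;\le\; v_0 + C\max(1,\sigma)\Lambda\,\omega_{1+\alpha}(t_n)\,V_n + \max_{1\le k\le n}\sum_{j=1}^{k}\overline{P}_{k-j}^{(k)}y_j + \omega_{1+\alpha}(t_n)\max_{1\le j\le n}z_j.
$$
The step-size restriction $\max_n\rho_n\le(2\Gamma(2-\alpha)\Lambda)^{-1/\alpha}$ ensures the self-coupling coefficient on $V_n$ is controlled, so iterating this inequality, the contribution from successive convolutions with the $\lambda$-sum collects as $\sum_{m\ge 0}(2\max(1,\sigma)\Lambda t_n^\alpha)^m/\Gamma(m\alpha+1)=E_{\alpha,1}(2\max(1,\sigma)\Lambda t_n^\alpha)$, which is exactly the Mittag-Leffler factor in the conclusion; the leading constant $2$ comes from the $\tfrac12$ extracted during linearization.

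I expect the main obstacle to be the linearization step. Term-by-term replacement of $(v_k+v_{k-1})\nabla_t v_k$ by $2V_n\nabla_t v_k$ fails because $\nabla_t v_k$ can be negative, so a naive estimate only yields a bound with $\max(v_k+v_{k-1})\le 2V_n$ on the absolute values. The rigorous resolution is an Abel summation that transfers differences from $v_k$ onto $a_{n^\ast-k}^{(n^\ast)}$, combined with the monotonicity $a_{k-1}^{(n^\ast)}\ge a_k^{(n^\ast)}$ and a uniform bound on $\rho_k/\rho_{k-1}$ specific to the symmetric graded mesh; this is precisely the technical heart of \cite{Liao-Sharp-2018,Bu-2024-Finite}. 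Once this reduction is in hand, the remainder of the proof is a mechanical convolution-inversion followed by a standard fractional Gronwall iteration.
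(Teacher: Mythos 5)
This lemma is quoted in the paper from \cite{Bu-2024-Finite,Liao-Sharp-2018}; the paper itself contains no proof, so your proposal can only be judged against the standard argument in those references.

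Your sketch has the right ingredients (complementary kernel $\overline{P}_{k-j}^{(k)}$, a maximum principle, a Mittag--Leffler induction), but the step you yourself flag as the ``technical heart'' --- linearizing $\sum_k a_{n^\ast-k}^{(n^\ast)}(v_k+v_{k-1})\nabla_t v_k$ by an Abel summation so as to replace $(v_k+v_{k-1})$ by $2V_n$ --- is not how the argument goes, and as you note it does not survive a term-by-term estimate since $\nabla_t v_k$ changes sign. The resolution is not a cleverer Abel summation but a reversal of your two steps: one first multiplies the hypothesis by $\overline{P}_{n-j}^{(n)}$ and sums over $j$, whereupon the identity $\sum_{i=j}^{k}\overline{P}_{k-i}^{(k)}a_{i-j}^{(i)}=1$ makes the \emph{quadratic} differences telescope exactly, $\sum_{j}\overline{P}_{n-j}^{(n)}\sum_{k}a_{j-k}^{(j)}\nabla_t(v_k)^2=(v_n)^2-(v_0)^2$, with no linearization needed. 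Only then does one pick $n^\ast$ with $v_{n^\ast}=\max_{0\le k\le n}v_k$, evaluate the resulting scalar inequality at $n=n^\ast$, use $(v_{n^\ast})^2-(v_0)^2\ge v_{n^\ast}(v_{n^\ast}-v_0)$ and $v_k^2\le v_{n^\ast}v_k$, and divide by $v_{n^\ast}$. Your attribution of the constant $\sigma=\max\{3^{r-1},2^r-1\}$ to the (nonexistent) linearization step is likewise off: it enters through the mesh-ratio--dependent bound $\overline{P}_{n-j}^{(n)}\lesssim\Gamma(2-\alpha)\rho_j^{\alpha}$ used inside the Mittag--Leffler induction, and the step-size restriction serves to absorb the $j=k=n$ contribution $\overline{P}_{0}^{(n)}\lambda_0 v_n^2\le\tfrac12 v_n^2$ into the left-hand side, which is where the leading factor $2$ originates. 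As written, your proof would stall at the linearization; with the order of operations corrected it becomes the standard argument.
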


\begin{remark}
Here, \(\{\overline{P}_{k-j}^{(k)}\}\) is defined in (\ref{eq:Definition of P-GradingA})
via the discretization coefficient $a_{k-j}^{(k)}$.
Since under the symmetric graded time mesh, each value of
$a_{k-j}^{(k)}$ depends on the superscript $k$,
it is evident that
\(\{\overline{P}_{k-j}^{(k)}\}\) differs from
$\{P_{k}\}$ determined by (\ref{eq:8}).
\end{remark}

Now we state the stability result of the fully discrete finite element scheme (\ref{eq:grade fully discrete scheme}).
\begin{theorem}
    Let $u_h^n$
  be the solution of the fully discrete scheme \eqref{eq:grade fully discrete scheme}. Then
\begin{equation}\label{eqs250907b}
    ||u_h^n|| \leq 2E_{\alpha,1}\left(2\max(1,\sigma)\Lambda t_{n}^{\alpha}\right) \left(||u_h^{0}|| + \omega_{1+\alpha}(t_{n}) \max_{1 \leq j \leq n} z_{j}\right),
\end{equation}
where $ z_j=2(c|b| \omega_{2-\alpha}(t_j)  + \| G^j \|)$,
$c=\sup\limits_{1 \leq k \leq 2N}||u^{k-2N}||$ and
\(\Lambda=(|b|^2+1)\omega_{2-\alpha}(K\tau)\).

\end{theorem}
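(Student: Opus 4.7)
The plan is to test (\ref{eq:grade fully discrete scheme}) against $v_h = u_h^n$, use the coercivity (\ref{eq: B is a non-negative quantity}) to drop $B(u_h^n, u_h^n) \geq 0$, and apply the standard L1 positivity
$(D^\alpha u_h^n, u_h^n) \geq \tfrac{1}{2}\sum_{k=1}^n a_{n-k}^{(n)} \nabla_t \|u_h^k\|^2$
to arrive at the energy-type inequality
\[
\sum_{k=1}^n a_{n-k}^{(n)} \nabla_t \|u_h^k\|^2 \;\leq\; 2|b|\,\|J^{1-\alpha} u_h^{n-2N}\|\,\|u_h^n\| + 2\|G^n\|\,\|u_h^n\|.
\]
With $v_k := \|u_h^k\|$, the aim is to massage the right-hand side into the template
$\sum_{k=1}^n \lambda_{n-k} v_k^2 + v_n(y_n + z_n)$ required by the discrete fractional Gronwall inequality in Lemma \ref{lem:gronwall}, with $y_n = 0$ and the $z_n$ prescribed in the statement.

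The central step is to handle the delay term
$\|J^{1-\alpha} u_h^{n-2N}\| \leq \sum_{k=1}^n \rho_k a_{n-k}^{(n)} \|u_h^{k-2N}\|$
by splitting the sum at the index $k = 2N$. For the historical block $k \leq 2N$, a direct telescoping using (\ref{eqs250829A}) yields
\[
\sum_{k=1}^{\min(n,2N)} \rho_k a_{n-k}^{(n)} \leq \omega_{2-\alpha}(t_n),
\]
and combined with $\|u_h^{k-2N}\| \leq c$ this contribution is absorbed into $v_n z_n$ with exactly $z_n = 2(c|b|\omega_{2-\alpha}(t_n) + \|G^n\|)$. For the dynamic block $k \geq 2N+1$ (active only when $n > 2N$), I apply the Young-type inequality
\[
2|b|\, v_n \cdot \rho_k a_{n-k}^{(n)} v_{k-2N} \leq |b|^2 \rho_k a_{n-k}^{(n)} v_{k-2N}^2 + \rho_k a_{n-k}^{(n)} v_n^2,
\]
and re-index $j = k - 2N$ in the first contribution so that the quadratic terms assemble into the Gronwall-compatible form $\sum_{k=1}^n \lambda_{n-k} v_k^2$.

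To verify the coefficient budget, a second telescoping gives $\sum_{k=2N+1}^n \rho_k a_{n-k}^{(n)} = \omega_{2-\alpha}(t_n - \tau) \leq \omega_{2-\alpha}(K\tau)$, so the $v_n^2$ piece contributes at most $\omega_{2-\alpha}(K\tau)$ to $\lambda_0$, while the re-indexed $v_{k-2N}^2$ piece contributes at most $|b|^2 \omega_{2-\alpha}(K\tau)$ to the $\sum_j \lambda_j$ budget. Together these produce $\sum_{j=0}^{2KN-1} \lambda_j \leq (|b|^2 + 1)\omega_{2-\alpha}(K\tau) = \Lambda$, matching the constant in the theorem. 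Assuming the mild step-size restriction $\max_n \rho_n \leq (2\Gamma(2-\alpha)\Lambda)^{-1/\alpha}$ required by Lemma \ref{lem:gronwall}, that lemma then delivers (\ref{eqs250907b}) directly. The main obstacle I anticipate is the bookkeeping: on the symmetric graded mesh the weights $\rho_k a_{n-k}^{(n)}$ depend on the outer index $n$ (unlike the uniform case of Section 3), so care is needed to confirm that the re-indexed $\lambda_{n-k}$ still satisfies the uniform bound $\Lambda$, and that the Young splitting is balanced in such a way that the telescoping sums match the stated $\Lambda$ without any extra constant.
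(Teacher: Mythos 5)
Your proposal is correct and follows essentially the same route as the paper: testing with $u_h^n$ (the paper uses $2u_h^n$, a trivial scaling), invoking the Alikhanov positivity of the L1 operator and nonnegativity of $B$, splitting the delay sum at $k=2N$ so the historical block is absorbed into $z_n$ via the telescoping identity $\sum_{k}\rho_k a_{n-k}^{(n)}=\omega_{2-\alpha}(t_n)$ and the dynamic block is handled by the same Young splitting with budget $\Lambda=(|b|^2+1)\omega_{2-\alpha}(K\tau)$, then applying Lemma \ref{lem:gronwall}. Your bookkeeping concern about the $n$-dependence of the reindexed weights is legitimate but is treated at the same level of detail in the paper itself, so there is no gap relative to the published argument.
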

\begin{proof}
Taking $v_h=2u_h^n$ into \eqref{eq:grade fully discrete scheme} yields
\begin{equation}\label{eqs250825F}
\begin{aligned}
&(D^\alpha u_h^n, 2u_h^n) + B(u_h^n, 2u_h^n) = b\left( \sum_{k = 1}^n \rho_k a_{n - k}^{(n)} u_h^{k - 2N}, 2u_h^n \right) + (G^n, 2u_h^n).
\end{aligned}
  \end{equation}
From \cite[Lemma 1]{Anatoly-scheme-2015424}, we have
\begin{equation}\label{eqs250829F}
      \begin{aligned}
(D^\alpha u_h^n, 2u_h^n) \geq \sum_{k = 1}^n a_{n - k}^{(n)} \nabla_t (u_h^k)^2.
\end{aligned}
  \end{equation}
  The combination of \eqref{eq: B is a non-negative quantity},\eqref{eqs250825F} and \eqref{eqs250829F}, it can be obtained that
\begin{equation}\label{eqs250829B}
      \begin{aligned}
 \sum_{k = 1}^n a_{n - k}^{(n)} \nabla_t (u_h^k)^2 \leq 2 \left[ |b|\sum_{k = 1}^n \rho_k a_{n - k}^{(n)} ||u_h^{k - 2N}|| + \| G^n \| \right] \| u_h^n \|,
\end{aligned}
  \end{equation}
where the Cauchy-Schwarz inequality is used.
By observing (\ref{eqs250829A}), it is clear that
\begin{equation}\label{eqs250828C}
\begin{aligned}
    \sum_{k=1}^n\rho_ka_{n-k}^{(n)} &=\sum_{k=1}^n \left(\omega_{2-\alpha}(t_{n}-t_{k-1}) - \omega_{2-\alpha}(t_{n}-t_{k})\right)\\
    &=\omega_{2-\alpha}(t_n).
\end{aligned}
\end{equation}
Hence, it follows from \eqref{eqs250829B} and \eqref{eqs250828C} that
\begin{equation}\label{eqs250829D}
      \begin{aligned}
 \sum_{k = 1}^n a_{n - k}^{(n)} \nabla_t (u_h^k)^2 \leq 2 \left (c|b|\omega_{2-\alpha}(t_n) + \| G^n \| \right) \| u_h^n \|,
\end{aligned}
\end{equation}
for $1 \leq n \leq 2N$; and
\begin{equation}\label{eqs250819E}
      \begin{aligned}
 \sum_{k = 1}^n a_{n - k}^{(n)} \nabla_t (u_h^k)^2
 &\leq 2 \left[ |b|\sum_{k = 1}^{2N} \rho_k a_{n - k}^{(n)} ||u_h^{k - 2N}|| +|b|\sum_{k = 2N+1}^{n} \rho_k a_{n - k}^{(n)} ||u_h^{k - 2N}||+  \| G^n \| \right] \| u_h^n \|\\
% &\leq 2|b| \sum_{k = 2N+1}^{n} \rho_k a_{n - k}^{(n)} ||u_h^{k - 2N}|| \cdot ||u_h^n||+2(c |b|\omega_{2-\alpha}(\tau)  + \| G^n \|)\\
 &\leq  \sum_{k = 2N+1}^{n} \rho_k a_{n - k}^{(n)}\left(|b|^2 ||u_h^{k - 2N}||^2+||u_h^n||^2 \right)
 +2(c |b|\omega_{2-\alpha}(t_n)  + \| G^n \|)||u_h^n||\\
 %&\leq |b|^2\sum_{k = 2N+1}^{n} \rho_k a_{n - k}^{(n)}||u_n^{k-2N}||^2+\omega_{2-\alpha}(t_n-\tau)||u_h^n||^2 + 2(c |b|\omega_{2-\alpha}(\tau)  + \| G^n \|)||u_h^n||\\
 &\leq |b|^2\sum_{k = 1}^{n-2N} \rho_k a_{n -2N- k}^{(n)}||u_n^{k}||^2
 +\omega_{2-\alpha}(t_n)||u_h^n||^2 + 2(c |b|\omega_{2-\alpha}(t_n)  + \| G^n \|)||u_h^n||,
\end{aligned}
  \end{equation}
for $2N+1 \leq n \leq 2KN$.
Based on (\ref{eqs250829D}) and (\ref{eqs250819E}), it is obvious that for $1\leq n \leq 2KN$, we have the following uniform result
\begin{equation}\label{eqs250907a}
\sum_{k = 1}^n a_{n - k}^{(n)} \nabla_t (u_h^k)^2
\leq |b|^2\sum_{k = 1}^{n-2N} \rho_k a_{n -2N- k}^{(n)}||u_n^{k}||^2
+\omega_{2-\alpha}(t_n)||u_h^n||^2 + 2(c |b|\omega_{2-\alpha}(t_n)  + \| G^n \|)||u_h^n||.
\end{equation}
Lastly, applying Lemma \ref{lem:gronwall} to \eqref{eqs250907a}, it yields
\eqref{eqs250907b} immediately.
\end{proof}

Now we consider the convergence of the fully discrete finite element scheme \ref{eq:grade fully discrete scheme}) based on the symmetric graded time mesh.
By applying a derivation process analogous to that used for obtaining \eqref{eqs250731C},
we can easily obtain the error equation for numerical scheme \ref{eq:grade fully discrete scheme}) under the non-uniform time mesh.
Therefore, for the sake of brevity, we omit the derivation and directly state the result
\begin{equation}\label{eqs250829G}
\begin{aligned}
({D}^{\alpha}\varepsilon_{h}^{n},v_h)+B(\varepsilon_{h}^{n},v_h)&=(bJ^{1 - \alpha}\varepsilon_{h}^{n - 2N},v_h)+(b(R_J^{1-\alpha})^{n-2N},v_h)+(bJ^{1 - \alpha}r_{h}^{n - 2N},v_h)\\
&\quad-((R_D^{\alpha})^n,v_h)-({D}^{\alpha}r_{h}^{n},v_h)\\
&:=(bJ^{1 - \alpha}\varepsilon_{h}^{n - 2N},v_h)+(R^{n},v_h),
    \end{aligned}
\end{equation}
where $\varepsilon_{h}^{n}, r_{h}^{n}$ are defined in Section 3, and
\begin{equation}\label{eqs250911def}
\begin{aligned}
    R^{n}=&-{D}^{\alpha}r_{h}^{n}+bJ^{1 - \alpha}r_{h}^{n - 2N}-(R_D^{\alpha})^{n}+b(R_J^{1-\alpha})^{n-2N}\\
    :=&R^{n}_{1}+R^{n}_{2}+R^{n}_{3}+R^{n}_{4}.
\end{aligned}
\end{equation}
Taking \(v_h = 2\varepsilon_h^n\) into \eqref{eqs250829G}, and using \eqref{eq: B is a non-negative quantity}, \eqref{eqs250829F} and the Cauchy-Schwarz inequality leads to
\begin{equation}\label{eqs250901H}
\begin{aligned}
\sum_{k=1}^{n}a_{n-k}^{(n)}(\varepsilon_h^n)^2\leq& 2|b|\sum_{k=1}^{n}\rho_k a_{n-k}^{(n)}||\varepsilon_h^{k-2N}||\cdot ||\varepsilon_h^{n}||+
2||R^n||\cdot||\varepsilon_h^{n}||\\
\leq&|b|^2\sum_{k = 1}^{n-2N} \rho_k a_{n -2N- k}^{(n)}||\varepsilon_n^{k}||^2
+\omega_{2-\alpha}(t_n)||\varepsilon_h^n||^2 + 2( |b|\omega_{2-\alpha}(t_n)\max\limits_{1\leq k\leq 2N}\|\varepsilon_h^{k-2N}\|  + \| R^n \|)||\varepsilon_h^n||.
\end{aligned}
\end{equation}
What is more, it follows from
Lemma \ref{lem:gronwall} to \eqref{eqs250901H}, \eqref{eqs250901H} can be derived as
\begin{equation}\label{eq:really gronwall}
\left\|\varepsilon_{h}^{n}\right\| \leq 2E_{\alpha,1}\left(2\max(1,\sigma)|\Lambda|(t_{n})^{\alpha}\right)\left(\|\varepsilon_{h}^{0}\|+\max_{1\leq k\leq n}\sum_{j = 1}^{k}\overline{P}_{k - j}^{(k)}y_{j}\right),
\end{equation}
where \(y_{j} = 2\left( |b|\omega_{2 - \alpha}(t_j)\max\limits_{1 \leq k \leq 2N} \| \varepsilon_h^{k-2N} \| + \left\|R^{j}\right\| \right)\), and \(
\Lambda = \omega_{2 - \alpha}(K\tau)(|b|^{2} + 1)\).

In order to investigate the global time error estimate of (\ref{eq:grade fully discrete scheme}), we introduce two useful lemmas firstly.

\begin{lemma}\cite{Tan}\label{lem:Properties of P on a Hierarchical Grid}
For the discrete coefficient \( \overline{P}_{n - k}^{(n)} \) defined by \eqref{eq:Definition of P-GradingA}, one has
\begin{align}
    \sum_{j = 2(k-1)N + 1}^{n} \overline{P}_{n - j}^{(n)} \omega_{1-\alpha}\left(t_j - (k - 1)\tau\right) &\leq 1, \quad 1 \leq k \leq K.
\end{align}
\end{lemma}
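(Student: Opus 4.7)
The plan is to reduce the lemma to an elementary algebraic identity for $\overline{P}_{n-j}^{(n)}$ combined with a one-line monotonicity estimate on the coefficients $a_{j-m}^{(j)}$. First I would establish the identity
\begin{equation*}
\sum_{i=m}^{n}\overline{P}_{n-i}^{(n)}\,a_{i-m}^{(i)} = 1, \qquad 1\le m\le n.
\end{equation*}
Multiplying the recursion \eqref{eq:Definition of P-GradingA} through by $a_{0}^{(j)} = a_{j-j}^{(j)}$ and absorbing this term into the sum rearranges the definition into $S_{j}=S_{j+1}$, where $S_{m}:=\sum_{i=m}^{n}\overline{P}_{n-i}^{(n)}a_{i-m}^{(i)}$. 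Since $S_{n}=\overline{P}_{0}^{(n)}a_{0}^{(n)}=1$, the identity follows for every $1\le m\le n$.

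Next I would record an integral representation that exposes the monotonicity of $a_{i-m}^{(i)}$. Using $\omega_{2-\alpha}'=\omega_{1-\alpha}$, the definition \eqref{eqs250829A} can be rewritten as
\begin{equation*}
a_{i-m}^{(i)}=\frac{1}{\rho_{m}}\int_{t_{m-1}}^{t_{m}}\omega_{1-\alpha}(t_{i}-s)\,ds.
\end{equation*}
Because $\omega_{1-\alpha}(x)=x^{-\alpha}/\Gamma(1-\alpha)$ is decreasing on $(0,\infty)$, the map $s\mapsto\omega_{1-\alpha}(t_{i}-s)$ is nondecreasing on $[t_{m-1},t_{m}]$ for any $i\ge m$. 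The average therefore exceeds the value at the left endpoint:
\begin{equation*}
a_{i-m}^{(i)}\ge \omega_{1-\alpha}(t_{i}-t_{m-1}).
\end{equation*}

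To finish, I would specialize to $m=2(k-1)N+1$; the symmetric graded mesh \eqref{eq:250906a} gives $t_{m-1}=t_{2(k-1)N}=(k-1)\tau$, so
\begin{equation*}
a_{i-(2(k-1)N+1)}^{(i)}\ge \omega_{1-\alpha}\bigl(t_{i}-(k-1)\tau\bigr),\qquad i\ge 2(k-1)N+1.
\end{equation*}
Multiplying by the nonnegative weights $\overline{P}_{n-i}^{(n)}$, summing from $i=2(k-1)N+1$ to $n$, and invoking the identity of Step 1 yields the upper bound $1$. The positivity $\overline{P}_{n-i}^{(n)}\ge 0$ needed here is obtained by a short induction on the recursion \eqref{eq:Definition of P-GradingA}, using that $a_{i-(j+1)}^{(i)}-a_{i-j}^{(i)}\ge 0$, which itself follows from the concavity of $\omega_{2-\alpha}$ applied to the divided differences at the arguments $t_{i}-t_{j+1}<t_{i}-t_{j}<t_{i}-t_{j-1}$.

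The main obstacle is not any single hard computation but the recognition that the lower cutoff $2(k-1)N+1$ is chosen precisely so that $t_{m-1}$ coincides with the delay node $(k-1)\tau$; once this alignment is spotted, Step 2 matches the target expression $\omega_{1-\alpha}(t_{j}-(k-1)\tau)$ and Step 1 supplies the sharp bound. The only delicate book-keeping is verifying positivity of $\overline{P}_{n-i}^{(n)}$, which rests on the concavity of $\omega_{2-\alpha}$ via a standard divided-difference argument and could otherwise be cited directly from \cite{Tan,Liao-Sharp-2018}.
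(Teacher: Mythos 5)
Your proof is correct, and it is essentially the standard complementary-discrete-kernel argument used in the cited reference \cite{Tan} (and in Liao--Li--Zhang): the telescoping identity $\sum_{i=m}^{n}\overline{P}_{n-i}^{(n)}a_{i-m}^{(i)}=1$, the lower bound $a_{i-m}^{(i)}\geq\omega_{1-\alpha}(t_i-t_{m-1})$ from the integral mean representation, and the mesh alignment $t_{2(k-1)N}=(k-1)\tau$. The paper itself offers no proof but only the citation, so nothing further needs to be compared.
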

\begin{lemma}\label{lem:the four}
Assume that $u$ be the solution of the
considered problem and satisfy the regularity (\ref{eqs250719A})--(\ref{eqs250719B}),
$u\in L^\infty(-\tau,K\tau;H^2(\Omega))$, $\partial_t u\in L^1(-\tau,K\tau;H^2(\Omega)),$
and $u^n_h, -2N\leq n\leq 0$ is a suitable approximation of $\varphi(x,t_n)$ such that
$\|u^n_h-\varphi(x,t_n)\|\lesssim h^2$.
Then, for \(R^n_i, i=1,2,3,4\) mentioned in \eqref{eqs250911def}, the following results hold
\begin{align*}
\max_{1 \leq i \leq n} \sum_{j=1}^{i} \overline{P}_{i-j}^{(i)} \| R^j_1 \| &\lesssim h^2 , \quad
\max_{1 \leq i \leq n} \sum_{j=1}^{i} \overline{P}_{i-j}^{(i)} \| R^j_2 \| \lesssim h^2, \\
\max_{1 \leq i \leq n} \sum_{j=1}^{i} \overline{P}_{i-j}^{(i)} \| R^j_3 \| &\lesssim N^{-\min\{2 - \alpha,\alpha r\}}, \quad
\max_{1 \leq i \leq n} \sum_{j=1}^{i} \overline{P}_{i-j}^{(i)} \| R^j_4 \| \lesssim N^{-1}.
\end{align*}
\end{lemma}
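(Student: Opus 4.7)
The plan is to handle the four residuals $R_1^j,R_2^j,R_3^j,R_4^j$ separately, combining two main ingredients. The first is the discrete duality identity $\sum_{j=k}^{i}\overline{P}_{i-j}^{(i)}a_{j-k}^{(j)}=1$, which follows from the definition \eqref{eq:Definition of P-GradingA} by the same telescoping calculation that proves the second property in Lemma~\ref{lem:Properties of P}. The second is Lemma~\ref{lem:Properties of P on a Hierarchical Grid}, which controls weighted sums of the form $\sum_{j=2(k-1)N+1}^{i}\overline{P}_{i-j}^{(i)}\omega_{1-\alpha}(t_j-(k-1)\tau)$. The bounds for $R_1$ and $R_2$ depend only on the first ingredient, while those for $R_3$ and $R_4$ require both, together with the mesh estimate $\rho_1\simeq N^{-r}$ from \eqref{eq:graded mesh satisfies}.

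For $R_1^j=-D^{\alpha}r_h^j$, I would expand $D^{\alpha}$ via \eqref{eq:discretization of Caputo}, write $r_h^k-r_h^{k-1}=\int_{t_{k-1}}^{t_k}\partial_s(u-R_hu)\,ds$, interchange the $j$ and $k$ summations, and collapse the resulting inner sum with the duality identity; this leaves $\int_0^{t_i}\|\partial_t(u-R_hu)\|\,ds\lesssim h^2$, by \eqref{eq:h^2u} and $\partial_t u\in L^1(-\tau,K\tau;H^2(\Omega))$. For $R_2^j=bJ^{1-\alpha}r_h^{j-2N}$, the same swap-and-collapse argument using \eqref{eq:J825}, together with $\sum_{k=1}^{i}\rho_k\leq K\tau$, reduces the sum to $\max_k\|(u-R_hu)(t_{k-2N})\|\lesssim h^2$. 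These two steps essentially replicate the uniform-mesh estimates \eqref{eqs250901K}--\eqref{eqs250901J}. For $R_4^j=b(R_J^{1-\alpha})^{j-2N}$, Lemma~\ref{lem: Truncation error of J on the hierarchical grid} gives the uniform pointwise bound $\|R_4^j\|\lesssim\rho_1^{1/r}\simeq N^{-1}$, so it only remains to show $\sum_{j=1}^{i}\overline{P}_{i-j}^{(i)}\lesssim 1$; applying Lemma~\ref{lem:Properties of P on a Hierarchical Grid} with $k=1$ and using the monotonicity $\omega_{1-\alpha}(t_j)\geq\omega_{1-\alpha}(t_i)$ yields $\sum_{j=1}^{i}\overline{P}_{i-j}^{(i)}\leq\Gamma(1-\alpha)t_i^{\alpha}\lesssim 1$, and the claimed $N^{-1}$ bound follows.

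The main obstacle is the bound for $R_3^j=-(R_D^{\alpha})^j$, where the target exponent $\min\{2-\alpha,\alpha r\}$ is the sharp graded-mesh L1 rate. I would split the outer sum at $j=2N$ and substitute the two-regime bound from Lemma~\ref{lem: Truncation error of D on the hierarchical grid}. On the first window $1\leq j\leq 2N$, setting $\sigma=\min\{(2-\alpha)/r,\alpha+1\}$ reduces the task to estimating
\[
\rho_1^{\sigma}\sum_{j=1}^{2N}\overline{P}_{i-j}^{(i)}t_j^{-\sigma}.
\]
The crucial trick is a dichotomy based on the sign of $\sigma-\alpha$. If $\sigma\leq\alpha$, I use $t_j^{-\sigma}\leq t_i^{\alpha-\sigma}t_j^{-\alpha}$ together with Lemma~\ref{lem:Properties of P on a Hierarchical Grid} at $k=1$ and $t_i\leq K\tau$, producing $\rho_1^{\sigma}\simeq N^{-r\sigma}$. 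If $\sigma>\alpha$, I instead write $t_j^{-\sigma}\leq t_1^{-(\sigma-\alpha)}t_j^{-\alpha}\simeq N^{r(\sigma-\alpha)}t_j^{-\alpha}$, so that the excess factor of $N$ cancels the excess in $\rho_1^{\sigma}$ and leaves $N^{-r\alpha}$. Either way the outcome is bounded by $N^{-\min\{2-\alpha,\alpha r\}}$.

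On the second window $2N+1\leq j\leq i$, I would apply the same mechanism with Lemma~\ref{lem:Properties of P on a Hierarchical Grid} invoked at $k=2$, which handles the shifted kernel $(t_j-\tau)^{-\alpha}$. The summand $\rho_1^{1+\alpha}(t_j-\tau)^{-\alpha}$ contributes only $N^{-r(1+\alpha)}$ and is dominated by the target. For the companion summand $\rho_1^{(2-\alpha)/r}(t_j-\tau)^{1-(2-\alpha)/r}$, I would repeat the $\sigma\leq\alpha$ versus $\sigma>\alpha$ dichotomy now in the shifted variable, relying on $(t_{2N+1}-\tau)^{-1}\simeq\rho_1^{-1/r}\simeq N$ to convert the remaining powers of $\rho_1$ into powers of $N$; this again reproduces $N^{-\min\{2-\alpha,\alpha r\}}$. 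The bookkeeping in this last conversion is the most delicate part of the proof, but it requires no new ideas beyond those already used on the first window.
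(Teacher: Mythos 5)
Your treatment of $R_1$, $R_2$, $R_4$ and of the first window $1\leq j\leq 2N$ for $R_3$ is essentially the paper's proof: the paper also invokes the analogues of \eqref{eqs250901K}--\eqref{eqs250901J} for $R_1,R_2$, bounds $R_4$ by the uniform estimate $\rho_1^{1/r}$ from Lemma~\ref{lem: Truncation error of J on the hierarchical grid} combined with Lemma~\ref{lem:Properties of P on a Hierarchical Grid}, and for the first window of $R_3$ reduces matters (after inserting $\omega_{1-\alpha}(t_j)t_j^{\alpha}$ and applying Lemma~\ref{lem:Properties of P on a Hierarchical Grid}) to $\max_j t_j^{\alpha}(\rho_1/t_j)^{\sigma}\lesssim\rho_1^{\min\{\sigma,\alpha\}}$, which is exactly your $\sigma\lessgtr\alpha$ dichotomy (the paper outsources it to the proof of Theorem~5.1 in \cite{Bu-2024-Finite}). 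The one place you diverge is the second window of $R_3$: the paper does not run a second dichotomy there but simply observes that both summands of the truncation error are pointwise $\lesssim\rho_1^{1/r}$ for $j\geq 2N+1$ and then applies Lemma~\ref{lem:Properties of P on a Hierarchical Grid} with the shifted kernel, obtaining $N^{-1}$ and hence the overall exponent $\min\{1,\alpha r\}$ (which is what the final theorem actually uses; note the paper's own proof of the $R_3$ bound lands on $N^{-\min\{1,\alpha r\}}$ rather than the $N^{-\min\{2-\alpha,\alpha r\}}$ displayed in the lemma). Your finer two-case treatment, carried out correctly, would in fact deliver the sharper exponent as stated.

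One concrete error to fix in that last step: you assert $(t_{2N+1}-\tau)^{-1}\simeq\rho_1^{-1/r}\simeq N$, but from \eqref{eq:250906a} one has $t_{2N+1}-\tau=\tfrac{\tau}{2}N^{-r}\simeq\rho_1$, so $(t_{2N+1}-\tau)^{-1}\simeq\rho_1^{-1}\simeq N^{r}$. The correct dichotomy on the exponent $1+\alpha-(2-\alpha)/r$ of $(t_j-\tau)$ then gives either $\rho_1^{(2-\alpha)/r}\simeq N^{-(2-\alpha)}$ (when the exponent is nonnegative) or $\rho_1^{(2-\alpha)/r}\cdot\rho_1^{\,1+\alpha-(2-\alpha)/r}=\rho_1^{1+\alpha}\simeq N^{-r(1+\alpha)}$ (when it is negative), and both are dominated by $N^{-\min\{2-\alpha,\alpha r\}}$ since $r(1+\alpha)>\alpha r$. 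So the conclusion survives, but as written your conversion of "powers of $\rho_1$ into powers of $N$" rests on a wrong asymptotic and should be corrected.
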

\begin{proof}
First, by employing the  derivations analogous to \eqref{eqs250901K} and \eqref{eqs250901J}, it is easy to check that
\[\max_{1 \leq i \leq n} \sum_{j=1}^{i} \overline{P}_{i-j}^{(i)} \| R^j_1 \| \lesssim h^2 , \quad
\max_{1 \leq i \leq n} \sum_{j=1}^{i} \overline{P}_{i-j}^{(i)} \| R^j_2 \| \lesssim h^2.\]
Due to $\rho_1^{1+\alpha}(t_j-\tau)^{-\alpha}\lesssim\rho_1^{1/r}$ and
$\rho_1^{(2-\alpha)/r}(t_j-\tau)^{(\alpha-2)/r+1}=\rho_1^{1/r}\left(\frac{\rho_1}{t_j-\tau}\right)^{(1-\alpha)/r}(t_j-\tau)^{1-1/r}\lesssim\rho_1^{1/r}$
for $j\geq2N+1,r>1$,
from Lemma \ref{lem: Truncation error of D on the hierarchical grid} and Lemma \ref{lem:Properties of P on a Hierarchical Grid}, one has
\begin{equation}\label{eq:R3}
    \begin{aligned}
     \max_{\substack{1 \leq i \leq n}} \sum_{j=1}^{i}\overline{P}_{i-j}^{(i)} \| R^j_3 \|
=& \max_{\substack{1 \leq i \leq n}} \left( \sum_{j=1}^{\min\{i,2N\}} \overline{P}_{i-j}^{(i)} \| R^j_3 \| + \sum_{j=2N+1}^{i} \overline{P}_{i-j}^{(i)} \| R^j_3 \| \right) \\
\lesssim& \max_{\substack{1 \leq i \leq n}} \Bigg( \sum_{j=1}^{\min\{i,2N\}} \overline{P}_{i-j}^{(i)}\omega_{1-\alpha}(t_j)t_j^\alpha\left( \frac{\rho_1}{t_j}\right)^{\min\{\frac{2-\alpha}{r},\alpha+1\}}\\
&+\sum_{j=2N+1}^{i} \overline{P}_{i-j}^{(i)} \omega_{1-\alpha}(t_j-\tau) (t_j-\tau)^\alpha\rho_1^{\frac{1}{r}}\Bigg) \\
\lesssim&  \max_{\substack{1 \leq i \leq n}} \left(\max_{\substack{1 \leq j \leq 2N}} \left\{t_j^\alpha \left( \frac{\rho_1}{t_j}\right)^{\min\{\frac{2-\alpha}{r},\alpha+1\}}\right\} +
\max_{\substack{2N+1 \leq j \leq i}}\left\{(t_j - \tau)^\alpha   \rho_1^{\frac{1}{r}}\right\} \right),
\end{aligned}
\end{equation}
where we stipulate that $\sum_{j=l}^m w_j=0$ and $\max_{l\leq j\leq m}w_j=0$ for $l>m.$
In \cite[The proof of Theorem 5.1]{Bu-2024-Finite}, by discussing three
different cases i.e. $\alpha<\frac{2 - \alpha}{r}- 1, \frac{2 - \alpha}{r}-1\leq \alpha<\frac{2 - \alpha}{r}$ and $\alpha>\frac{2 - \alpha}{r}- 1$,
it has been shown that
\[
t_{j}^{\alpha}\left(\frac{\rho_{1}}{t_{j}}\right)^{\frac{2 - \alpha}{r}}
\lesssim \rho_{1}^{\min\{\frac{2 - \alpha}{r},\alpha\}}.
\]
Therefore \eqref{eq:R3} can be bounded by
\begin{equation*}
    \begin{aligned}
      \max_{1\leq k\leq n}\sum_{j = 1}^{k}P_{k - j}^{(k)}\left\|R_{j}^{3}\right\|
      &\lesssim \rho_{1}^{\min\{\frac{2 - \alpha}{r},\alpha\}}+ \rho_1^{\frac{1}{r}}\\
      &\lesssim N^{-\min\{1,\alpha r\}}.
    \end{aligned}
\end{equation*}
%\(R_{n}^{4}=b(R_J^{1-\alpha})^{n-N}\),
Besides, based on Lemma \ref{lem: Truncation error of J on the hierarchical grid}
and Lemma \ref{lem:Properties of P on a Hierarchical Grid}, it is clear that
\begin{equation*}\label{eq:Rj4}
    \begin{aligned}
        \max_{1\leq i\leq n}\sum_{j = 1}^{i}P_{i - j}^{(i)}\left\|R_{j}^{4}\right\|&\lesssim\max_{1\leq i\leq n}\left(\sum_{j = 1}^{i}\overline{P}_{i - j}^{(i)}\omega_{1-\alpha}(t_j)t_j^\alpha\rho_1^{\frac{1}{r}}\right)\\
        &\lesssim \rho_1^{\frac{1}{r}}
        \lesssim N^{-1}.
    \end{aligned}
\end{equation*}
The proof is completed.
\end{proof}

Now applying Lemma \ref{lem:the four} to \eqref{eq:really gronwall}, we can conclude that
\begin{equation}\label{eqs250814A}
\begin{aligned}
\left\|\varepsilon_{h}^{n}\right\|
    &\leq 2E_{\alpha,1}\left(2\max(1,\sigma)|\Lambda|t_{n}^{\alpha}\right)
    (h^2
    + N^{-\min\{1,\alpha r\}}
    + N^{-1})\\
    &\lesssim h^2+N^{-\min\{1,\alpha r\}}.
\end{aligned}
\end{equation}
Thus, in virtue of (\ref{eq:h^2u}), (\ref{eqs250814A}) and the fact $\left\|u^{n}-u_{h}^{n}\right\|\leq\left\|\varepsilon_{h}^{n}\right\|+\left\|r_{h}^{n}\right\| $, the following global time convergence result can be obtained.
\begin{theorem}
Under the assumptions of Lemma \ref{lem:the four}, the numerical solution \(u_h^{n}\)
from (\ref{eq:grade fully discrete scheme}) based on the symmetric graded time mesh
satisfies
$$ \left\|u^{n}-u_{h}^{n}\right\|\lesssim h^{2}+N^{-\min\{1,\alpha r\}}. $$
\end{theorem}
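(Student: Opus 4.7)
The plan is to split the total error as $u^n - u_h^n = r_h^n + \varepsilon_h^n$ with $r_h^n = u^n - R_h u^n$ and $\varepsilon_h^n = R_h u^n - u_h^n$. The Ritz projection bound \eqref{eq:h^2u} immediately gives $\|r_h^n\| \lesssim h^2$, so the nontrivial task is to control $\|\varepsilon_h^n\|$ in the $L^2$ norm uniformly for $1 \le n \le 2KN$.

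For the discretization error, I would subtract the discrete scheme \eqref{eq:grade fully discrete scheme} from its continuous analog obtained by testing the reformulated equation \eqref{eq:After Integration} at $t = t_n$ against $v_h \in X_h$. Regrouping the residuals from the Ritz projection, the L1 formula, and the fractional right rectangle rule produces the error equation \eqref{eqs250829G} with $R^n = R_1^n + R_2^n + R_3^n + R_4^n$ as listed in \eqref{eqs250911def}. Testing with $v_h = 2\varepsilon_h^n$, discarding the non-negative $B(\varepsilon_h^n, \varepsilon_h^n)$ term by \eqref{eq: B is a non-negative quantity}, and invoking the discrete positivity inequality \eqref{eqs250829F} yields a quadratic inequality in $\{(\varepsilon_h^k)^2\}$. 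The delay integral $J^{1-\alpha}\varepsilon_h^{n-2N}$ would be split into the historical part (controlled by the data assumption $\|\varepsilon_h^{k-2N}\| \lesssim h^2$ for $1 \le k \le 2N$) and the part involving the unknown errors for $k > 2N$, handled via Cauchy--Schwarz with the weights $\rho_k a_{n-2N-k}^{(n)}$. This casts the inequality into the template required by Lemma \ref{lem:gronwall}.

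Applying Lemma \ref{lem:gronwall} with $\Lambda = \omega_{2-\alpha}(K\tau)(|b|^2 + 1)$ delivers \eqref{eq:really gronwall}. The truncation bounds assembled in Lemma \ref{lem:the four}, together with the history contribution appearing in $y_j$, show $\max_{1 \le k \le n} \sum_{j=1}^k \overline{P}_{k-j}^{(k)} y_j \lesssim h^2 + N^{-\min\{1,\alpha r\}}$. Since the Mittag--Leffler factor $E_{\alpha,1}(2\max(1,\sigma)\Lambda t_n^\alpha)$ is bounded on $[0, K\tau]$, I obtain $\|\varepsilon_h^n\| \lesssim h^2 + N^{-\min\{1,\alpha r\}}$. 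Combined with $\|r_h^n\| \lesssim h^2$ and the triangle inequality, the advertised estimate follows.

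The hard step is verifying that the L1 truncation $R_3^n$ from Lemma \ref{lem: Truncation error of D on the hierarchical grid}, whose regularity-driven asymptotics differ qualitatively on $(0,\tau]$ and $(\tau, K\tau]$, sums correctly against the graded-mesh kernel $\overline{P}_{k-j}^{(k)}$ to reach the optimal $N^{-\min\{1,\alpha r\}}$ rate rather than a subcritical rate. Lemma \ref{lem:the four} packages this delicate accounting (via a case split on $\alpha$ versus $(2-\alpha)/r$), so within the present assembly step the main obstacle reduces to confirming that the Gronwall constant $\Lambda$ depends only on $|b|^2 + 1$ and $K\tau$ rather than blowing up with $N$; this is precisely what the delay-splitting in the Gronwall setup ensures.
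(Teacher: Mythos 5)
Your proposal is correct and follows essentially the same route as the paper: the Ritz-projection splitting $u^n-u_h^n=r_h^n+\varepsilon_h^n$, the error equation \eqref{eqs250829G} tested with $v_h=2\varepsilon_h^n$, the delay-term splitting feeding into the discrete fractional Gronwall inequality of Lemma \ref{lem:gronwall} with $\Lambda=\omega_{2-\alpha}(K\tau)(|b|^2+1)$, and the residual bounds of Lemma \ref{lem:the four} combined with the bounded Mittag--Leffler factor. No substantive differences from the paper's argument.
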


\section{Numerical experiment}
In this section, we validate the established theoretical results by several numerical tests
under two different cases. In the first case, an exact solution of the considered problem \eqref{eq:main-equation}--\eqref{eq:boundary conditions} which has the regularities \eqref{eqs250719A}--\eqref{eqs250719B} is provided, while the second case gives the
right-hand side function and initial function satisfying the conditions mentioned in \cite[Theorem 1]{Bu-2025-Finite} to make sure that the regularity assumptions \eqref{eqs250719A}--\eqref{eqs250719B} are satisfied. In order to examine the errors
and convergence orders, we define
\[E(M,N,k,l)=\max_{2kN+1 \leq n \leq 2lN} \| \bar{u}^n - u_h^{n} \|,\]
and
\[rate_t=\log_2\left(\frac{E(M,N,k,l)}{E(M,2N,k,l)}\right), \
rate_s=\log_2\left(\frac{E(M,N,k,l)}{E(2M,N,k,l)}\right),\]
where $M$ denotes the number of spatial subdivisions, $\bar{u}^n = u^n$ when the exact solution is known, otherwise $\bar{u}^n$ represents an approximation of $u^n$ on a sufficiently fine mesh.
%and the \(rate_t\) is employed to assess the local temporal convergence over the intervals \((k-1,k]\) for \(k = 1,2,3\), while the \(rate_s\) is used to study the spatial case convergence, for the uniform grid, \(rate_t\) denotes the convergence order related to k. for the symmetric grade mesh, \(rate_t\) and \(rate_s\) denotes that it is independent of k. \\

\textbf{Example 1.} For the considered problem \eqref{eq:main-equation}--\eqref{eq:boundary conditions}, assume that $\tau=b=1$,
the spatial domain is \([0, 1]\) and the temporal interval is \((0, 3]\). Two different
cases are considered as follows:

$\bullet$ In the first case,
let \(p=\frac{1}{\pi^2}\), \(a=-2\)  and the initial function \( \varphi(x,t) =  (1+t)\sin(\pi x) \). Here, we suppose that the exact solution is
\[
u(x,t) =sin(\pi x )
\begin{cases}
1 + t + t^\alpha, & 0 < t \leq 1, \\
1 + t + t^\alpha + (t - 1)^{\alpha + 1}, & 1 < t \leq 2 ,\\
1 + t + t^\alpha + (t - 1)^{\alpha + 1} + (t - 2)^{\alpha + 2}, & 2<t \leq 3.
\end{cases}
\]
By combining the known conditions presented above, it is clear that we can determine the corresponding right-hand side function $f(x,t)$ easily.

$\bullet$ In the second case, we take $p = \frac{1}{5}, a = -1,$ \(f(x,t) = t^2\sin(\pi x)\)
and $\varphi(x,t) = (1+\pi t)\sin(\pi x)$.

We show the numerical results in Tables \ref{tab:numerical_temporal_accuracy}--\ref{tab:space 1/alp}.
Tables \ref{tab:numerical_temporal_accuracy}--\ref{tab:exact spical grade}
are obtained based on the the first case of Example 1, while Tables \ref{tab:The case of uniform grid without true solution}--\ref{tab:space 1/alp} are obtained under the second
case. In addition, when we compute Example 1 based on the uniform time mesh, the numerical results are presented in Tables \ref{tab:numerical_temporal_accuracy}, \ref{tab:spatial2}, \ref{tab:The case of uniform grid without true solution} and \ref{tab:space without exact solution}, and the remaining tables provide the numerical results under the symmetric graded time mesh.
By fixing $M=1000$ to eliminate the influence of spatial errors, and taking $\alpha=0.5, 0.7$ and $\alpha=0.6, 0.8$ respectively, Tables \ref{tab:numerical_temporal_accuracy} and \ref{tab:The case of uniform grid without true solution} demonstrate that the error $E(M,N,k,l)$ exhibits $\alpha$-order convergence on the interval $(0, 1]$, while it reaches first-order convergence on $(1, 3]$.
In Tables \ref{tab:addlabel} and \ref{tab:space without exact solution}, we choose $N=5000$ with different $\alpha$ to examine the spatial convergence rate at $t=t_{6N}$. Evidently, it implies that the spatial convergence order is $2$.
Now we observe the numerical results which are computed based on the symmetric graded
time mesh. Similarly, we still take $M=1000$ to test the temporal convergence rate and choose $N=5000$ to examine the spatial accuracy.
Tables \ref{tab:alp=0.5,r=3/5}, \ref{tab:alp=0.7,r=3/5}, and \ref{tab:The case of symmetric grade grid without true solution} are used to examine the temporal convergence rate
with $\alpha = 0.5, 0.7$ and $0.6$. It indicates that the convergence order is $\min\{1, r\alpha\}$. In Tables \ref{tab:exact spical grade} and \ref{tab:space 1/alp}, we choose $r = \frac{1}{\alpha}$ to check the spatial accuracy which shows a second-order convergence rate. It is obvious that all numerical results in Tables \ref{tab:numerical_temporal_accuracy}--\ref{tab:space 1/alp} corroborate our theory. Lastly,
we provide Figure \ref{fig:without exact solution} to show the numerical solution and its cross-section at $x=\frac{1}{2}$ for the second case.
\begin{table}[!h]
  \centering
  \caption{Numerical temporal accuracy for different $\alpha$.}
  \begin{tabularx}{\linewidth}{@{}XXXXXXX@{}}
    \toprule
    \ \ \ \ $\alpha$ & $N$ & \multicolumn{2}{c}{$k=0, \ l = 1$} & \multicolumn{2}{c}{$k=1, \ l = 3$} \ \ \\
    \cline{3-4} \cline{5-6}
    & & $E(M, N, k, l)$ & $\text{rate}_t$  & $E(M,N,k,l)$ & $\text{rate}_t$ \\
    \hline
    \ \ \ \ 0.5 & 200  & 6.6964e-03 & --              & 1.1199e-03 & --       \\
        & 400  & 4.9043e-03 & 0.4493   & 5.5610e-04 & 1.0100   \\
        & 800  & 3.5577e-03 & 0.4631   & 2.7435e-04 & 1.0193   \\
        & 1600 & 2.5627e-03 & 0.4733   & 1.3354e-04 & 1.0387   \\
    \hline
   \ \ \ \ 0.7 & 200  & 1.8911e-03 & --  & 1.1573e-03 & --       \\
        & 400  & 1.1894e-03 & 0.6775   & 5.7071e-04 & 1.0199   \\
        & 800  & 7.3507e-04 & 0.6858   & 2.7978e-04 & 1.0285   \\
        & 1600 & 4.5539e-04 & 0.6908   & 1.3353e-04 & 1.0482   \\
    \hline
  % \ \ \ \ 0.8 & 200  & 8.8971e-04 & --  & 1.2830e-03 & --       \\
    %    & 400  & 5.2456e-04 & 0.7622 & 6.0749e-04 & 1.0276   \\
   %     & 800  & 3.0598e-04 & 0.7777 & 2.9628e-04 & 1.0359   \\
  %      & 1600 & 1.7746e-04 & 0.7859 & 1.4258e-04 & 1.0552   \\
 %   \hline
  \end{tabularx}
  \label{tab:numerical_temporal_accuracy}
\end{table}

\begin{table}[H]
  \centering
  \caption{Numerical spatial accuracy at $t=t_{6N}$ for different $\alpha$.}
    \begin{tabularx}{\linewidth}{@{}XXXXXXX@{}}
    \toprule
   \ \ \ \ {\( M \)} & \multicolumn{2}{c}{\( \alpha = 0.5 \)} & \multicolumn{2}{c}{\( \alpha = 0.6 \)} & \multicolumn{2}{c}{\( \alpha = 0.8 \)}  \\
    \cline{2-3} \cline{4-5} \cline{6-7}\vspace{2pt}
          & \( ||u^{6N}-u_h^{6N}||\) & \( \text{rate}_s \) & \( ||u^{6N}-u_h^{6N}|| \) & \( \text{rate}_s \) & \( ||u^{6N}-u_h^{6N}|| \) & \( \text{rate}_s \)  \\
    \midrule
   \ \ \ \ 8     &  3.6113e-02 & --    & 3.6126e-02 & --    & 3.6155e-02 & --    \\
   \ \ \ \ 16    & 9.0583e-03 & 1.9952 & 9.0600e-03 & 1.9955 & 9.0682e-03 & 1.9953  \\
   \ \ \ \ 32    & 2.2604e-03 & 2.0027 & 2.2593e-03 &  2.0037 & 2.2623e-03 & 2.0030 \\
   \ \ \ \ 64    & 1.6238e-03 & 2.0102 & 5.5694e-04 & 2.0203 & 5.5870e-04 & 2.0176 \\
   \ \ \ \(O(h^2)\) & --    & 2.0000 & --    & 2.0000 & --    & 2.0000  \\
    \bottomrule
    \end{tabularx}\label{tab:spatial2}
  \label{tab:addlabel}
\end{table}

\begin{table}[H]
\centering
\caption{Numerical temporal accuracy with $\alpha=0.5$.}
\begin{tabularx}
{\linewidth}{@{}ccccccccc@{}}
\cline{1-2}\cline{2-3} \cline{4-5} \cline{6-7} \cline{8-9}
$N$ & \multicolumn{2}{c}{$r = \frac{4}{3}$} & \multicolumn{2}{c}{$r = \frac{5}{3}$} & \multicolumn{2}{c}{$r = 2$} & \multicolumn{2}{c}{$r = 3$} \\
\cline{2-9}
& $E(M, N,0,3)$ & $\text{rate}_t$ & $E(M, N, 0, 3)$ & $\text{rate}_t$ & $E(M, N, 0, 3)$ & $\text{rate}_t$ & $E(M, N, 0, 3)$ & $\text{rate}_t$ \\
\cline{1-2}\cline{2-3} \cline{4-5} \cline{6-7} \cline{8-9}
400 & 1.9109e-03 & -- & 7.1956e-04 & -- & 1.5232e-04 & -- & 2.1773e-04 & -- \\
800 & 1.2191e-03 & 0.6484 & 8.8941e-05 & 0.8253 & 7.5500e-05 & 1.0025 & 1.0896e-04 & 0.9987 \\
1600 & 7.7433e-04 & 0.6548 & 2.2845e-04 & 0.8288 & 3.6816e-05 & 1.0162 & 5.3879e-05 & 1.0161 \\
3200 & 4.9047e-04 & 0.6588 & 1.2844e-04 & 0.8307 & 1.7388e-05 & 1.0222 & 2.6092e-05 & 1.0260 \\
$\min\{ar, 1\}$ & -- & 0.6667 & -- & 0.8333 & -- & 1.0000 & -- & 1.0000 \\
\cline{1-2}\cline{2-3} \cline{4-5} \cline{6-7} \cline{8-9}
\end{tabularx}
\label{tab:alp=0.5,r=3/5}
\end{table}

\begin{table}[H]
\centering
\caption{Numerical temporal accuracy with $\alpha=0.7$.}
\begin{tabularx}{\linewidth}{@{}ccccccccc@{}}
\cline{1-2}\cline{2-3} \cline{4-5} \cline{6-7} \cline{8-9}
{$N$} & \multicolumn{2}{c}{$r = \frac{8}{7}$} & \multicolumn{2}{c}{$r = \frac{10}{7}$} & \multicolumn{2}{c}{$r = \frac{15}{7}$} & \multicolumn{2}{c}{$r = 3$} \\
\cline{2-3} \cline{4-5} \cline{6-7} \cline{8-9}
& $E(M, N, 0, 3)$ & $\text{rate}_t$ & $E(M, N, 0, 3)$ & $\text{rate}_t$ & $E(M, N, 0, 3)$ & $\text{rate}_t$ & $E(M, N, 0, 3)$ & $\text{rate}_t$ \\ \cline{1-2}\cline{2-3} \cline{4-5} \cline{6-7} \cline{8-9}
400 & 6.6882e-04
 & -- & 1.1730e-04
 & -- & 1.5841e-04
 & -- &  1.6047e-04 & -- \\
800 &3.8862e-04 &   0.7833
 & 5.9028e-05 & 0.9774 & 8.0820e-05& 0.9709 & 8.2501e-05 & 1.0011 \\
1600 & 2.9033e-05
 &  0.7893
 & 2.9033e-05 & 1.0037 & 4.0512e-05
 & 0.9963 & 4.1613e-05
 & 1.0125 \\
3200 & 1.3696e-05
 & 0.7920 & 1.3696e-05 & 1.0139
 & 1.9742e-05 & 1.0171 & 2.0357e-05
 & 1.0215 \\
$\min\{ar, 1\}$ & -- & 0.8000 & -- & 1.0000 & -- & 1.0000 & -- & 1.0000 \\
\cline{1-2}\cline{2-3} \cline{4-5} \cline{6-7} \cline{8-9}
\end{tabularx}\label{tab:alp=0.7,r=3/5}
\end{table}

\begin{table}[!h]
  \centering
  \caption{Numerical spatial accuracy for different $\alpha$ with $r=\frac{1}{\alpha}$.}
    \begin{tabularx}{\linewidth}{@{}XXXXXXX@{}}
    \toprule
   \ \ \ \ {\( M \)} & \multicolumn{2}{c}{\( \alpha = 0.4 \)} & \multicolumn{2}{c}{\( \alpha = 0.6 \)} & \multicolumn{2}{c}{\( \alpha = 0.8 \)}  \\
   \cline{2-3} \cline{4-5} \cline{6-7}
          & \( E(M, N, 0, 3) \) & \( \text{rate}_s \) & \( E(M, N, 0, 3) \) & \( \text{rate}_s \) & \( E(M, N, 0, 3) \) & \( \text{rate}_s \)  \\
    \midrule
    \ \ \ \ 8     &   1.1474e-01 & --    & 1.2452e-01 & --    & 1.2452e-01 & --    \\
   \ \ \ \ 16    &   2.8761e-02 & 1.9961 & 3.1228e-02 & 1.9954 & 9.0682e-03 & 1.9953  \\
   \ \ \ \ 32    & 7.1492e-03 &  2.0083 &  7.7783e-03  &  2.0053 & 2.2623e-03 & 2.0030 \\
   \ \ \ \ 64    &  1.7390e-03 & 2.0396 & 1.9078e-03 & 2.0275 & 5.5870e-04 & 2.0176 \\
   \ \ \  \(O(h^2)\) & --    & 2.0000 & --    & 2.0000 & --    & 2.0000  \\
    \bottomrule
    \end{tabularx}\label{tab:exact spical grade}
\end{table}

\begin{table}[!h]
  \centering
  \caption{Numerical temporal accuracy for different $\alpha$.}
  \begin{tabularx}{\linewidth}{@{}XXXXXX@{}}
    \hline
   \ \ \ \ $\alpha$ & $N$ & \multicolumn{2}{c}{$k = 0,\ l=1$} & \multicolumn{2}{c}{$k = 1,\ l=3$}  \\
    \cline{3-4} \cline{5-6}
    & & $E(M, N, k , l)$ & $\text{rate}_t$ & $E(M, N, k,l)$ & $\text{rate}_t$  \\
    \hline
   \ \ \ \ 0.6
    & 400  & 3.6320e-03 & --       &  2.1565e-04
        & --           \\
        & 800  & 2.4061e-03 &   0.5940  & 2.4061e-03
               &  0.9826 \\
        & 1600 & 1.5916e-03 &    0.5962
               &  5.5029e-05 &  0.9878    \\
        & 3200 & 1.0519e-03 & 0.5975   & 2.7677e-05 &            0.9915      \\
    \hline
   \ \ \ \ 0.8 & 400  & 8.0498e-04
        & --       & 2.2216e-04 & --       \\
        & 800  & 4.5413e-04 & 0.8259  & 4.5413e-04 &  0.9795
        \\
    & 1600 &2.5813e-04 & 0.8150
      & 5.6914e-05 & 0.9853
        \\
        & 3200 &  1.4740e-04 & 0.8083   & 2.8675e-05 &  0.9890    \\
    \hline
  \end{tabularx}
  \label{tab:The case of uniform grid without true solution}
\end{table}

\begin{table}[H]
  \centering
  \caption{Numerical spatial accuracy at $t=t_{6N}$ for different $\alpha$.}
  \label{tab:spatial_accuracy_N30000}
  \begin{tabularx}{\linewidth}{@{}XXXXXXX@{}}
    \hline
   \ \ \ \ \( M \) & \multicolumn{2}{c}{ \( \alpha = 0.4 \) } & \multicolumn{2}{c}{ \( \alpha = 0.5 \) } & \multicolumn{2}{c}{ \( \alpha = 0.6 \) } \\
    \cline{2-3} \cline{4-5} \cline{6-7}
    & \( ||u^{6N}-u_h^{6N}|| \) & \( \text{rate} \) & \(  ||u^{6N}-u_h^{6N}||\) & \( \text{rate} \) & \(  ||u^{6N}-u_h^{6N}||\) & \( \text{rate} \) \\
    \hline
   \ \ \ \ 8  & 9.2406e--03 & --      & 1.0986e--02 & --      & 9.4463e-03 & --      \\
   \ \ \ \ 16 & 2.3364e--03 & 1.9837  & 2.7790e--03 & 1.9803  & 2.3885e-03 & 1.9837  \\
   \ \ \ \ 32 & 5.5873e--04 & 1.9960  & 6.9678e--04 & 1.9958  & 5.9879e-04 &  1.9960  \\
   \ \ \ \ 64 & 1.4654e--04 & 1.9990  & 1.7432e--04 & 1.9989  & 1.4980e--04 & 1.9990  \\
    \ \ \   \(O(h^2)\)  & --          & 2.0000     & --          & 2.0000     & --          & 2.0000     \\
    \hline
  \end{tabularx}
  \label{tab:space without exact solution}
\end{table}

\begin{table}[H]
\centering
\caption{Numerical temporal accuracy with $\alpha=0.6$.}
\begin{tabularx}{\linewidth}{@{}ccccccccc@{}}
\cline{1-2}\cline{2-3} \cline{4-5} \cline{6-7} \cline{8-9}
{$N$} & \multicolumn{2}{c}{$r = \frac{4}{3}$} & \multicolumn{2}{c}{$r = \frac{5}{3}$} & \multicolumn{2}{c}{$r =2$} & \multicolumn{2}{c}{$r = \frac{7}{3}$} \\
 \cline{2-3} \cline{4-5} \cline{6-7} \cline{8-9}
& $E(M, N,0,3)$ & $\text{rate}_t$ & $E(M, N,0,3)$ & $\text{rate}_t$ & $E(M, N,0,3)$ & $\text{rate}_t$ & $E(M, N,0,3)$ & $\text{rate}_t$ \\ \cline{1-2}\cline{2-3} \cline{4-5} \cline{6-7} \cline{8-9}
200 &  2.1945e-03
 & -- & 9.3063e-04
    & -- &  5.3981e-04 & -- & 6.0398e-04 & -- \\
400 &  1.2739e-03 &  0.7847 & 4.6937e-04 & 0.9875
   & 2.7041e-04    & 0.9973 & 3.0286e-04 & 0.9959\\
800 &  7.3611e-04 & 0.7912 & 2.3571e-04 & 0.9937
    & 1.3541e-04 & 0.9979 & 1.5177e-04   & 0.9967 \\
1600 & 4.2427e-04 & 0.7949 & 1.1811e-04 & 0.9996
    & 6.7779e-05 & 0.9984 & 7.6018e-05 & 0.9975 \\
$\min\{ar, 1\}$ & -- & 0.8000 & -- & 1.0000 & -- & 1.0000 & -- & 1.0000 \\
\cline{1-2}\cline{2-3} \cline{4-5} \cline{6-7} \cline{8-9}
\end{tabularx}\label{tab:The case of symmetric grade grid without true solution}
\end{table}

\begin{table}[H]
  \centering
  \caption{Numerical spatial accuracy for different $\alpha$ with $r=\frac{1}{\alpha}$.}
  \label{tab:space_errors}
  \begin{tabularx}{\linewidth}{@{}ccccccccc@{}}
    \cline{1-2}\cline{2-3} \cline{4-5} \cline{6-7} \cline{8-9}
    \( M \) & \multicolumn{2}{c}{ \( \alpha = 0.4 \) } & \multicolumn{2}{c}{ \( \alpha = 0.5 \) } & \multicolumn{2}{c}{ \( \alpha = 0.6 \) } & \multicolumn{2}{c}{ \( \alpha = 0.7 \) } \\
    \cline{2-3} \cline{4-5} \cline{6-7} \cline{8-9}
    & \( E(M, N,0,3) \) & \( \text{rate}_s \) & \(E(M, N,0,3) \) & \( \text{rate}_s \) & \( E(M, N,0,3)\) & \( \text{rate}_s \) & \( E(M, N,0,3) \) & \( \text{rate}_s \) \\
    \cline{1-2}\cline{2-3} \cline{4-5} \cline{6-7} \cline{8-9}
    8  &  9.2407e-03 & --     & 9.3399e-03 & --     & 9.4463e-03 & --     & 9.5612e--03 & --     \\
    16 & 2.3364e-03 & 1.9837  & 2.3615e-03 & 1.9837  & 2.3885e--03 & 1.9837  & 2.4176e--03 & 1.9836  \\
    32 & 5.8574e--04 & 1.9960  & 5.9204e-04 & 1.9960  & 5.9879e--04 & 1.9960  & 6.0610e--04 & 1.9959  \\
    64 & 1.4654e--04 & 1.9990  & 1.4811e-04 & 1.9990  & 1.4980e--04 & 1.9990  & 1.5163e--04 & 1.9990  \\
     \(O(h^2)\)  & --          & 2.0000  & --          & 2.0000  & --          & 2.0000  & --          & 2.0000  \\
    \cline{1-2}\cline{2-3} \cline{4-5} \cline{6-7} \cline{8-9}
\end{tabularx}\label{tab:space 1/alp}
\end{table}

\begin{figure}[H]
  \centering
  \begin{minipage}{0.50\textwidth}
    \centering
    \includegraphics[width=\textwidth]{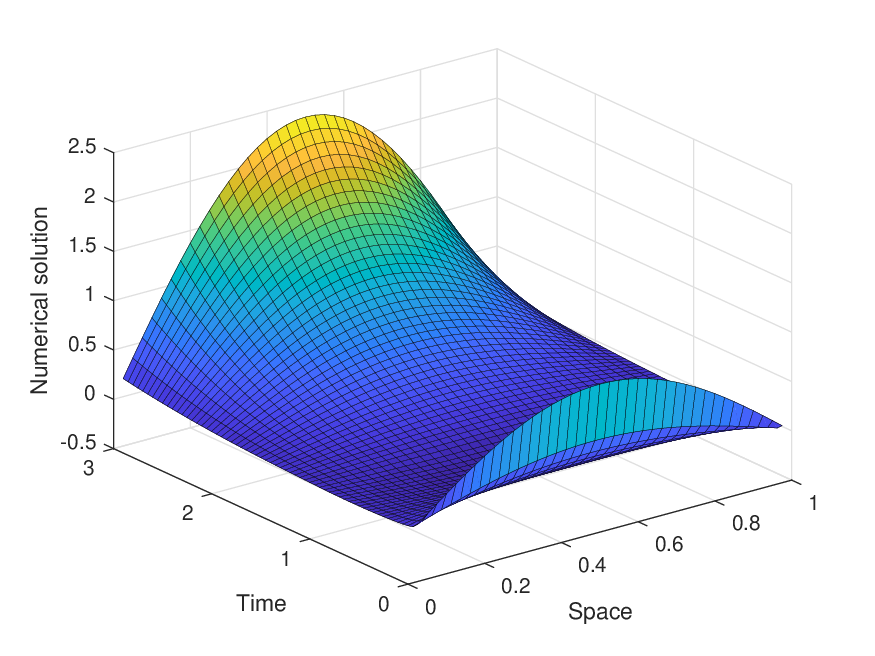} % 替换为第一张图的实际文件名
  \end{minipage}
  \hfill
  \begin{minipage}{0.45\textwidth}
    \centering
    \includegraphics[width=\textwidth]{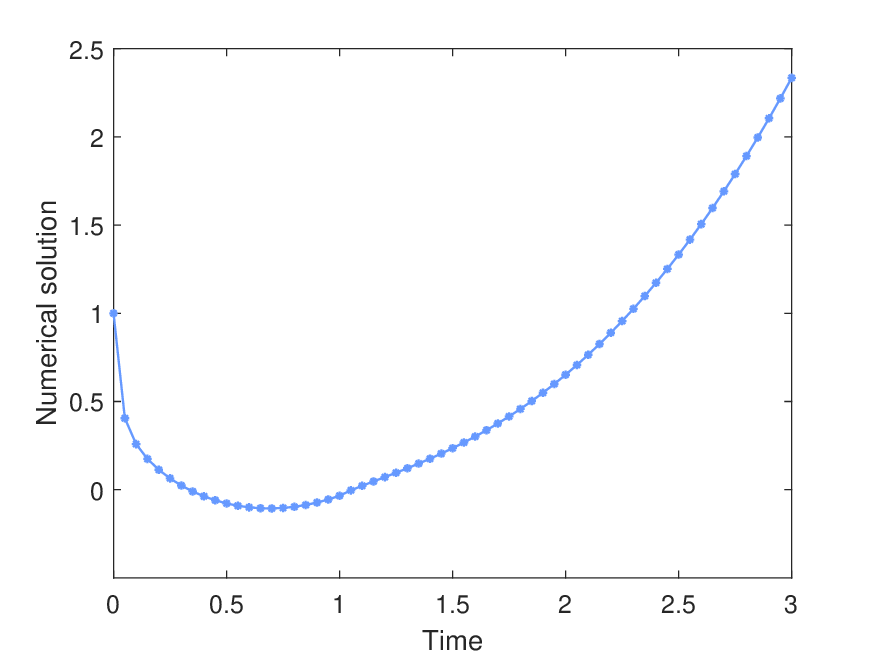} % 替换为第二张图的实际文件名
  \end{minipage}
  \caption{The numerical solution and its cross-section at  $x=\frac{1}{2}$ with $\alpha=0.3,r=1, M=40$ and $N=10.$}
  \label{fig:without exact solution}
\end{figure}

\section{Conclusion}
In this paper, we develop finite element methods for solving a subdiffusion equation with
constant time delay based on both uniform and symmetric graded time meshes. To analyze the
convergence of the proposed numerical scheme, we rigorously examine the local truncation
errors of the L1 formula for Caputo fractional derivative and the fractional right rectangle formula for Riemann-Liouville fractional integral under the assumption of low regularity of
the true solution at $t=0$ and $t=\tau$. For the case of uniform time mesh, we introduce discrete coefficients ${P_k}$ to establish the unconditional stability of the numerical
scheme, subsequently deriving local temporal error estimate. The convergence results
indicate that the maximum temporal error order is $\alpha$ for the interval $(0,\tau]$ and 1 for $(\tau,K\tau].$ We then apply a classical discrete fractional Gronwall inequality to
investigate the stability and global temporal error estimates for the numerical scheme based on the symmetric graded time mesh. The global time convergence order is found to
be $\min\{1,r\alpha\}$. It is worth noting that when $r=1,$ the global convergence order is $\alpha$ in this situation. However, our analysis for the uniform time mesh in Section 3
demonstrates that the maximum temporal error order can reach 1 for the interval
$(\tau,K\tau],$ clearly indicating that the convergence result obtained in Section 3 is
sharp compared with that in Section 4 under this specific situation. Moreover, our
numerical theory suggests that by adjusting the mesh parameter $r$, the accuracy of the
numerical solution can be flexibly tuned between the local and global time convergence
orders. Finally, several numerical experiments are provided to validate the correctness of
the theoretical findings.

\section*{Acknowledgement}
This work is supported by the Research Foundation of Education Commission of Hunan Province of China (Grant No. 23A0126) and the 111 Project (Grant No. D23017).

%\section*{References}

\bibliographystyle{unsrt}

\end{document}